\numberwithin{equation}{section}
\DeclareMathOperator*{\esssup}{ess\,sup}
\def\R{\mathbb{R}}
\def\kasten{$~~\mbox{\hfil\vrule height6pt width5pt depth-1pt}$ }
\newtheorem{Theorem}{Theorem}[section]
\newtheorem{Corollary}[Theorem]{Corollary}
\newtheorem{Definition}[Theorem]{Definition}
\newtheorem{Proposition}[Theorem]{Proposition}
\newtheorem{Lemma}[Theorem]{Lemma}
\newtheorem{Remark}[Theorem]{Remark}
\begin{document}

\def\kasten{$~~\mbox{\hfil\vrule height6pt width5pt depth-1pt}$}
\makeatletter\def\theequation{\arabic{section}.\arabic{equation}}
\makeatother

\newtheorem{theorem}{Theorem}
\centerline{\large \bf The random dynamical pitchfork bifurcation with additive L\'evy noises
\footnote{This work was partly supported by the
 NSFC grant XXXX.}\hspace{2mm}
\vspace{1mm}\vspace{1mm}\\ }
\smallskip
\centerline{\bf Ziying He$^{a}\footnote{ziyinghe@whut.edu.cn}$,
\bf Xianming Liu$^{*b}\footnote{xmliu@hust.edu.cn}$}
\smallskip
\centerline{${}^a$ School of Science, Wuhan University of Technology,}
\centerline{Wuhan, Hubei 430070, China,}
\centerline{${}^b$ School of Mathematics and Statistics, Huazhong University of Sciences and Technology,}
\centerline{Wuhan, Hubei 430074, China}

\begin{abstract}
This paper concerns the effects of additive non-Gaussian L\'evy noises on the pitchfork bifurcation. We consider two types of noises, $\alpha$-stable process and the truncated process. Under both $\alpha$-stable process and the truncated process, the classical pitchfork bifurcation model exists a unique invariant measure. The Lyapunov exponent associated with the invariant measure is always negative for the system under the truncated case. While the stochastic pitchfork bifurcation still occurs. In both cases, the attractivity uniformity, the finite-time Lyapunov exponent, and the dichotomy spectrum behave varies with the bifurcation parameter changing.

Compared with Brownian motion, there is two key difficulties for the L\'evy processes. The stationary density can not be solved explicitly, thus we have to estimate it properly. This is overcome by the strong maximum principle. The bilateral suprema for L\'evy processes need to be analyzed. This is acquired by the strong Markov property. Based on them, we establish the main results indicating stochastic pitchfork bifurcation.

\end{abstract}

{\bf Key Words and Phrases}: Lyapunov exponent; Uniformly exponential attractive; Finite-time Lyapunov exponent; Dichotomy spectrum; 

{\footnotesize \textbf{2020 Mathematics Subject Classification}:  37H20, 34F10, 37H15, 37H30, 34F05.}
\section{Introduction}
Stochastic bifurcations are widespread in nature. Stochastic bifurcation theory investigates the qualitative changes in parametrized families of Random dynamical system. As the meeting of two mathematical cultures, stochastic dynamics can be addressed through the approaches of stochastic analysis and dynamical systems. Being the classical method of the former, Markov semigroup leads to the notion of P-bifurcation by investigating a qualitative change in the shape of it's stationary measure. The latter derives the notion of D-bifurcation by exploring the branching of it's invariant measures and the stability. The two concepts complement each other \cite{Ludwig Arnold}. We concern D-bifurcation primarily.

Stochastic factors can be depicted with Gaussian or non-Gaussian processes in line with different versions of the central limit theorem. Influenced by stochastic factors, dynamical systems  under L\'evy processes deserve investigated. We are interested in the effects of L\'evy processes on pitchfork bifurcation. The buckling beam example offers an instance of pitchfork bifurcation. Consider the following stochastic dynamical system
$$\mathrm{d}X_t=(\beta X_t-X_t^3)\mathrm{d}t+\sigma\mathrm{d}L_t^{\alpha}, \quad X_0\in\mathbb{R},$$
with the rotational invariant L\'evy process $(L_{t}^{\alpha})_{t\in\R}$ defined on a probability space $(\Omega,\mathcal{F},\mathbb{P})$. 

In the case of deterministic with $\sigma=0$, the system displays a pitchfork bifurcation on equilibria. If $\beta<0$, there exists a unique equilibrium $0$ which is attractive globally. If $\beta>0$, there exist three equilibria $0$, and $\pm\beta$ with the first repulsive and the additional attractive.

In the case of Gaussian noise with $\alpha=2$, the system was surveyed by H. Crauel and F. Flandoli in 1998 \cite{Hans Crauel and Franco Flandoli}. In the classical frame of stochastic bifurcation theory established by L. Arnold and co-workers \cite{Ludwig Arnold}, the system exhibits that the additive noise destroys a pitchfork bifurcation. Specifically, the system has a unique invariant measure with a negative Lyapunov exponent. Right after that, M. Callaway, T. Doan, J. Lamb and M. Rasmussen argued that the pitchfork bifurcation was not destroyed by additive noise in 2017 \cite{Mark Callaway}. They discover additional indicators, including attractive uniformity, finite-time Lyapunov exponent and dichotomy spectrum, to detect stochastic bifurcation. They also propose a concept of uniform topological equivalence to characterize stochastic bifurcation.

In the case of non-Gaussian noises defined in \eqref{levy-process} with L\'evy measure being \eqref{nu}, we establish a sequence of results detecting the stochastic pitchfork bifurcation of the system. We call the l\'evy process defined in \eqref{levy-process} as non-truncated if $\nu(B_1^c)\neq0$, and truncated if $\nu(B_1^c)=0$. 

The system exists a unique invariant measure under the random dynamical system. This is proved by the relation between the invariant measure under Markov semigroup and that under random dynamical system. Furthermore, the Lyapunov exponent associated with the invariant measure exists and is negative in the truncated case \eqref{levy-process}. This is not trivial compared to that of Brownian motion, since that the stationary density is not solvable explicitly in the non-Gaussian case. The existence is confirmed by the finite second moment and multiplicative ergodic theorem. The estimation towards the stationary density is by the strong maximum principle. Then the negativity of the Lyapunov exponent is estimated properly. 
In the frame of the stochastic bifurcation theory established 
by L. Arnold, the pitchfork bifurcation is destroyed by the non-Gaussian noise also. 

Furthermore, the attractive uniformity of the random attractor, the sign of the finite-time Lyapunov exponent and the dichotomy spectrum perform differently with different value of $\beta$. The position of the singleton random attractor is estimated on basis of the support of stationary density being the whole line. The L\'evy processes stay in the vicinity of original for very long time with a positive probability is certified by the strong Markov property. Based on these facts, the attractive uniformity of the random attractor, the sign of the finite-time Lyapunov exponent and the dichotomy spectrum are analyzed. The behaviors of these objects, proposed by J. Lamb and co-workers \cite{Mark Callaway},  indicate the occurring of the stochastic pitchfork bifurcation. 

The present paper is arranged as follows. In section \ref{sectionSetup}, we recall the related concepts about invariant measure under the random dynamical system and that under Markov semigroup. In section \ref{sectionmain}, we describe the main results of this paper. In section \ref{sectionbasic}, we conduct some basic analyses containing exponential ergodicity of the Markov semigroup, the collapse of the random attractor and the finite moment of the invariant measure under the Markov semigroup. In section \ref{sectionkey}, we verify the key Lemmas required for the main results. In section \ref{Proof of the main results}, we prove the main results.

\section{Setup}\label{sectionSetup}
Let $(\Omega,\mathcal{F},\mathbb{P})$ be a probability space and $(X,\mathcal{B})$ be a metric space. The probability space  $(\Omega,\mathcal{F},\mathbb{P})$ is equipped with the filtration $\mathcal{F}_t$. We denote the past filtration and the future filtration as $\mathcal{F}_t^-$ and $\mathcal{F}_t^+$ respectively \cite{David Applebaum, Ludwig Arnold}. Consider the following stochastic differential equations (SDEs):
\begin{eqnarray}\label{d-dim-sys}
\mathrm{d}X_{t}=b(X_{t})\mathrm{d}t+\sigma\mathrm{d}L_{t},\quad  X_0\in \mathbb{R}^{d},
\end{eqnarray}
where $\{L_{t}\}_{t\in\R}$ is the rotational invariant L\'evy process. It's L\'evy-It$\hat{o}$ decomposition is
\begin{eqnarray}\label{levy-process}
L_t=\left\{\begin{array}{ll}                                  
                                        \int_{B_1}z\widetilde{N}(t,\mathrm{d}z)+\int_{B_1^c}z N(t,\mathrm{d}z), & \mbox{ if} \quad\nu(B_1^c)\neq0,\\
                                         \int_{B_1}z\widetilde{N}(t,\mathrm{d}z), & \mbox{ if}\quad\nu(B_1^c)=0,
                \end{array}\right.
\end{eqnarray}
where $B_r$ is the ball with the origin as center and $r(>0)$ as radius.  
For every $t\in [0,\infty), A\in\mathcal{B}(\mathbb{R}^d)$, the Poisson random measure is
$$N(t,A)(\omega)=\#\{0\leq s\leq t;\Delta L_s(\omega)\in A\}.$$
Further, the compensated Poisson random measure is
$$\widetilde{N}(t,A)=N(t,A)-t\nu(A),$$
with L\'evy measure $\nu$, that is it satisfies
$$\int_{\mathbb{R}^d\setminus\{0\}}(|z|^2\wedge1)\nu(\mathrm{d}z)<+\infty.$$
The corresponding characteristic function is
\begin{eqnarray}\label{characteristic-function}
\varphi_t(u)=\left\{\begin{array}{ll}\exp\bigg\{t\int_{\mathbb{R}^d\setminus\{0\}}[e^{i(u,z)}-1-i(u,z)\chi_B(z)]\mu(\mathrm{d}z)\bigg\}, & \mbox{ if} \quad\nu(B_1^c)\neq0,\\
\exp\bigg\{t\int_{B_1\setminus\{0\}}[e^{i(u,z)}-1-i(u,z)]\mu(\mathrm{d}z)\bigg\}, & \mbox{ if}\quad\nu(B_1^c)=0.
                             \end{array}\right.
\end{eqnarray}
The Markov semigroup $(P_t)_{t\geq0}$ associated with system \eqref{d-dim-sys} is
$$P_{t}f(x)=\mathbb{E}^xf(X_t),\quad t\geq0,\quad x\in \mathbb{R}^d,$$
for any $f\in\mathcal{B}_b(\mathbb{R}^{d})$, where $\mathcal{B}_b(\mathbb{R}^{d})$ is the space of all bounded Borel measurable functions on $\mathbb{R}^{d}$ equipped with the sup-norm. 
\begin{Definition}{\bf (Invariant measure under semigroup)}
Let $(X,\mathcal{B})$ be a measurable space and let $\mathcal{B}_b(X)$ be the space of all bounded $\mathcal{B}$-measurable functions on $X$ equipped with the sup-norm. If $\{P_t\}_{t\geq0}$ is a semigroup of bounded linear operators on the space $\mathcal{B}_b(X)$, then a bounded measure $\mu$ on $\mathcal{B}$ is called invariant for $\{P_t\}_{t\geq0}$ if 
$$\int_{X}P_tf\mathrm{d}\mu=\int_{X}f\mathrm{d}\mu,  \quad\forall f\in \mathcal{B}_b(X).$$
\end{Definition} 
\noindent Define 
$$D_{\mathcal{L}_{b}}=\{f\in C^1(\mathbb{R}^{d}): \exists g_f\in C^1(\mathbb{R}^{d}) \mbox{ such that } \, \lim_{t\downarrow0}||\frac{P_tf-f}{t}-g_f||=0 \}.$$
\noindent For $f\in D_{\mathcal{L}_{b}}$, the generator of the Markov semigroup associated with system \eqref{d-dim-sys} is
$$\mathcal{L}_{b}f(x)=b(x)\cdot\bigtriangledown f(x)+\mathcal{L}f(x),$$
where
\begin{eqnarray}\label{generator}
\mathcal{L}f(x)=\left\{\begin{array}{ll}
                                      P.V.\int_{\mathbb{R}^{d}}[f(x+z)-f(x)]\nu(\mathrm{d}z), & \mbox{ if} \quad\nu(B_1^c)\neq0,\\
                                      \\
                                      P.V.\int_{B_1}[f(x+z)-f(x)]\nu(\mathrm{d}z), & \mbox{ if} \quad\nu(B_1^c)=0.
                                      \end{array}
                                      \right.
\end{eqnarray}
For the refinement definition of $D_{\mathcal{L}_{b}}$ refer \cite{Claudia Bucur, Qing Han}. The density function $p(t,x)$ of the process $X_{t}$ in system \eqref{d-dim-sys} satisfies the Fokker-Planck equation, i.e.
\begin{eqnarray}\label{fokker-planck}
p_t=\mathcal{L}_{b}^{*}p,\,\,p\geq0,\,\,\int_{\mathbb{R}^{d}}p(t,x)\mathrm{d}x=1,
\end{eqnarray}
where $\mathcal{L}_{b}^{*}$ is the adjoint operator of the generator $\mathcal{L}_{b}$ for system \eqref{d-dim-sys}. 
Specifically, take 
\begin{eqnarray}\label{nu}
\nu(\mathrm{d}z)=\nu_\alpha(\mathrm{d}z)=\frac{c_\alpha}{|z|^{d+\alpha}}\mathrm{d}z,
\end{eqnarray}
we arrive at the $\alpha$-stable L\'evy motion $L_{t}^\alpha$,
where the constant $c_\alpha=\frac{\alpha}{2^{1-\alpha}\pi^{d/2}}\frac{\Gamma((d+\alpha)/2)}{\Gamma(1-\alpha/2)}$, and the Gamma function $\Gamma(\lambda):=\int_{0}^{\infty}t^{\lambda-1}e^{-t}\mathrm{d}t$ for every $\lambda>0$ \cite{JinqiaoDuan}. In this case $\mathcal{L}f(x)=-(-\Delta)^{\alpha/2}$ \cite{Mateusz Kwasnicki} and the corresponding operator in Fokker-Planck equation \eqref{fokker-planck} associated with system \eqref{d-dim-sys} is 
$$\mathcal{L}_b^*p=-(-\Delta)^{\alpha/2}p-\nabla(b\cdot p).$$

We consider the stochastic bifurcation phenomenon under the frame of random dynamical system (RDS). For this, recall the related definitions.
\begin{Definition}{\bf(Metric dynamical system)}\cite{Ludwig Arnold}
A measurable dynamical system $(\theta(t))_{t\in\mathbb{T}}$ on a probability space $(\Omega,\mathcal{F},\mathbb{P})$ for which each $\theta(t)$ is an endomorphism is called a measure preserving or metric dynamical systems and is denoted by $\Sigma=(\Omega,\mathcal{F},\mathbb{P},(\theta(t)))_{t\in\mathbb{T}})$, or for short, by $\theta(\cdot)$ or $\theta$.
\end{Definition}

\begin{Definition}{\bf(Random dynamical system)}\cite{Ludwig Arnold}
A measurable random dynamical system (RDS) on the measurable space $(X,\mathcal{B})$ over a metric dynamical system $(\Omega,\mathcal{F},\mathbb{P},(\theta(t))_{t\in \mathbb{T}})$ with time $\mathbb{T}$ is a mapping
$$\varphi:\mathbb{T}\times\Omega\times X\to X,\qquad(t,\omega,x)\mapsto\varphi(t,\omega,x),$$
with the following properties:
\flushleft
\begin{enumerate}
\item[(i)] Measurability: $\varphi$ is $\mathcal{B}(\mathbb{T})\otimes\mathcal{F}\otimes \mathcal{B}$, $\mathcal{B-}$measurable.
\item[(ii)] Cocycle property: The mappings $\varphi(t,\omega):=\varphi(t,\omega,\cdot):X\to X$ form a cocycle over $\theta(\cdot)$, i.e. they satisfy
$$\varphi(0,\omega)=id_{X}\qquad\mbox{for all } \omega\in\Omega \qquad(\mbox{if } \,\, 0\in\mathbb{T}).$$
$$\varphi(t+s,\omega)=\varphi(t,\theta(s)\omega)\circ\varphi(s,\omega),\qquad\mbox{for all }\,\, s,t\in\mathbb{T},\,\, \omega\in\Omega.$$
\end{enumerate}
\end{Definition}

\begin{Definition}{\bf (Invariant measure under random dynamical system)}\label{inv-measure}\cite{Ludwig Arnold}
Given a measurable RDS $\varphi$ over $\theta$, a probability measure $\mu$ on $(\Omega\times X,\mathcal{F}\otimes\mathcal{B})$ is said to be an invariant measure for the RDS $\varphi$, or $\varphi-$invariant, if it satisfies\\
1. $\Theta(t)\mu=\mu$ for all $t\in\mathbb{T}$,\\
2. $\pi_{\Omega}\mu=\mathbb{P}$.
\end{Definition}
\noindent Define
$$\mathcal{P}_P(\Omega\times X):=\{\mu \mbox{ probability on } (\Omega\times X, \mathcal{F}\otimes\mathcal{B})\mbox{ with marginal } \mathbb{P} \mbox{ on } (\Omega,\mathcal{F})\}$$
\begin{Definition}{\bf (Markov measure)}\label{Mark-measure}
\cite{Ludwig Arnold}
Let $\varphi$ be a measurable RDS with two-sided time with past $\mathcal{F}^-$ and future $\mathcal{F}^+$. A probability measure $\mu\in\mathcal{P}_P(\Omega\times X)$ for which the factorization $\omega\mapsto\mu_{\omega}$ is $\mathcal{F}^-$-measurable or $\mathcal{F}^+$-measurable, i.e. $\mathbb{E}(\mu.|\mathcal{F}^\pm)=\mu.$ $\mathbb{P}$-a.s., is called a Markov measure. More specifically, an $\mathcal{F}^-/\mathcal{F}^+$-measurable $\mu$ is called a forward/backward Markov measure.
\end{Definition}
\begin{Remark}\label{relation invariant measure}
The invariant measure under semigroup and that under random dynamical system are not the same objects. They may behave very differently. Consider the one dimensional Ornstein-Uhlenbeck process \cite{Hans Crauel Markov}
$$\mathrm{d}X_t=\beta X_t\mathrm{d}t+\sigma\mathrm{d}W_t,\qquad  X_0\in\mathbb{R},$$
with $\beta\neq0$ and $\sigma\neq0$ for two sided time $\mathbb{T}=\mathbb{R}$. 

For all $\beta\in\mathbb{R}$, there is a unique invariant measure under the random dynamical system $\mu$ with
$\mu_\omega=\delta_{X_t(\omega)}$, where
$$X_t(\omega)=\left\{\begin{array}{ll} 
                 -\int_0^\infty e^{-t\beta}\sigma\mathrm{d}W_t,\quad \mbox{if }\beta>0,\\
                       \\
                        \int_{-\infty}^0 e^{-t\beta}\sigma\mathrm{d}W_t,\quad \mbox{if }\beta<0.
\end{array}\right.$$

For $\beta<0$, the invariant measure $\rho$ under the associated Markov semigroup is the density of the Gaussian distribution $\mathcal{N}(0,-\frac{\sigma^2}{2\beta})$. For $\beta>0$, there exists no invariant measure under the Markov semigroup.  

While the invariant measure under semigroup and that under random dynamical system are are connected through the Markov measure. If $\beta<0$, then $\mu$ is Markov. If $\beta>0$, then $\mu$ is not Markov.
There is a one-to-one correspondence between the invariant Markov measure under random dynamical system and the invariant measure under semigroup \cite{Hans Crauel and Franco Flandoli, Hans Crauel Markov}. They are connected through the following relations,
$$\rho=\mathbb{E}\mu, \quad \mbox{ i.e. } \rho(B)=\int_\Omega\mu_\omega(B)\mathrm{d}P(\omega),$$
and
$$\mu_\omega=\lim_{t\to\infty}\varphi(t,\theta_{-t}\omega)\rho, \quad a.s.$$
where $\varphi$ is the random dynamical system generated by the above Ornstein-Uhlenbeck process.
\end{Remark}

\section{The main results}\label{sectionmain}
We concern the classical pitchfork bifurcation system disturbed by the rotational invariant L\'evy process $(L_{t}^{\alpha})_{t\in\R}$ defined in \eqref{levy-process},
\begin{eqnarray}\label{eq0}
\mathrm{d}X_t=(\beta X_t-X_t^3)\mathrm{d}t+\sigma\mathrm{d}L_t^{\alpha}, \quad X_0\in\mathbb{R},
\end{eqnarray}
with the drift term $b(x)=\beta x-x^3$, the bifurcation parameter $\beta\in\R$, and the noise intensity parameter $\sigma\in\R$.  It is well known that, the strong solution exists and system \eqref{eq0} generates the random dynamical system. Furthermore, there exists a unique invariant measure under random dynamical system, supported by a singleton random attractor as stated in Proposition \ref{Cllapse of random attractor}. The corresponding Lyapunov exponent depicts the stability of random dynamical system well.
\begin{Definition}{\bf(Lyapunov exponent)}\label{Ludwig Arnold}
The (forward) Lyapunov exponent of the solution $\Phi(t)x$ of a non-autonomous linear differential equation $\dot{x}_t=A(t)x_t$ starting at time $t=0$ at the state $x\in\mathbb{R}^d$ is defined to be the Lyapunov index of $\Phi(t)x$,
$$\lambda^+(x)=\lambda(x):=\limsup_{t\to+\infty}\frac{1}{t}\ln||\Phi(t)x||,$$
and for two-sided time the backward Lyapunov exponent of $\Phi(t)x$ is defined as the Lyapunov index of $\Phi(-t)x$,
$$\lambda^-(x)=\lambda(x):=\limsup_{t\to+\infty}\frac{1}{t}\ln||\Phi(-t)x||=\limsup_{t\to-\infty}\frac{1}{|t|}\ln||\Phi(t)x||.$$
\end{Definition}
If the linear cocycle $\Phi$ satisfies the integrability conditions in Multiplicative ergodic Theorem \cite{Ludwig Arnold}, the Lyapunov exponent exists as a limit and is a constant in the ergodic case.

\begin{Theorem}{\bf(Lyapunov exponent)}\label{Lya-expo}
In the case of truncated with $\alpha\in(1,2)$, the Lyapunov exponent associated with the unique invariant measure of system \eqref{eq0} is negative. 
\end{Theorem}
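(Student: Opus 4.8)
The plan is to reduce the assertion to a Furstenberg--Khasminskii identity for the Lyapunov exponent $\lambda$ and then to read off its sign from the self-adjointness of the nonlocal generator, combined with the elementary monotonicity inequality $(\ln a-\ln b)(a-b)\ge 0$; in particular no closed form for the stationary density is needed. Since the noise in \eqref{eq0} is additive, the linearisation of the flow along a solution $(X_t)$ obeys $\dot v_t=b'(X_t)v_t$ with $b'(x)=\beta-3x^2$, so that $v_t=v_0\exp\!\big(\int_0^t b'(X_s)\,\mathrm{d}s\big)$, which is absolutely continuous in $t$ and is unaffected by the jumps of $L^\alpha$. Using the exponential ergodicity of the Markov semigroup and the finiteness of the second moment of the semigroup-invariant measure $\rho$ (both established in Section~\ref{sectionbasic}), the multiplicative ergodic theorem guarantees that $\lambda$ exists as a limit, and the Birkhoff ergodic theorem together with the correspondence $\pi_{\mathbb R}\mu=\rho$ between the invariant (Markov) measure $\mu$ of the RDS and $\rho$ (Remark~\ref{relation invariant measure}) gives
$$\lambda=\int_{\mathbb R}b'(x)\,\rho(\mathrm{d}x)=\beta-3\int_{\mathbb R}x^2\,\rho(\mathrm{d}x).$$
It therefore suffices to show that this quantity is negative for every $\beta\in\mathbb R$ (equivalently, $\int_{\mathbb R}x^2\,\mathrm{d}\rho>\beta/3$).

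In the truncated case the stationary Fokker--Planck equation reads $(b\rho)'=\mathcal L\rho$, with $\mathcal L g(x)=\frac12\int_{B_1}[g(x+z)+g(x-z)-2g(x)]\,\nu(\mathrm{d}z)$, which is symmetric on $L^2(\mathbb R)$ because $\nu$ is rotationally invariant. First I would invoke the strong maximum principle for this nonlocal operator to conclude that $\rho$ is a smooth, strictly positive classical solution, so that $\ln\rho$ is a well-defined $C^2$ function, and I would argue — using that $b(x)=\beta x-x^3$ is strongly confining while the jumps are bounded — that $\rho$ decays faster than any polynomial at $\pm\infty$, which makes all boundary terms and double integrals below legitimate. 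Multiplying $(b\rho)'=\mathcal L\rho$ by $\ln\rho$ and integrating over $\mathbb R$, two integrations by parts give
$$\int_{\mathbb R}\ln\rho\,(b\rho)'\,\mathrm{d}x=-\int_{\mathbb R}(\ln\rho)'\,b\rho\,\mathrm{d}x=-\int_{\mathbb R}b\rho'\,\mathrm{d}x=\int_{\mathbb R}b'\rho\,\mathrm{d}x=\lambda,$$
while the symmetry of $\mathcal L$ yields the Dirichlet-form representation
$$\int_{\mathbb R}\ln\rho\,\mathcal L\rho\,\mathrm{d}x=-\frac12\iint\big(\ln\rho(x+z)-\ln\rho(x)\big)\big(\rho(x+z)-\rho(x)\big)\,\nu(\mathrm{d}z)\,\mathrm{d}x.$$

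The integrand in the last display is nonnegative because $a\mapsto\ln a$ is increasing, whence $\lambda\le 0$. Equality would force $\rho(x+z)=\rho(x)$ for $\nu\otimes\mathrm{d}x$-a.e.\ $(z,x)$; since $\nu$ charges every neighbourhood of the origin and $\rho$ is continuous, this would make $\rho$ constant on $\mathbb R$, which is incompatible with $\int_{\mathbb R}\rho=1$. Hence $\lambda<0$ for every $\beta\in\mathbb R$, in particular for $\alpha\in(1,2)$. This is exactly the nonlocal analogue of the Crauel--Flandoli computation, in which the Fisher-information term $\frac{\sigma^2}{2}\int_{\mathbb R}(\rho')^2/\rho$ is replaced by the nonlocal energy above.

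I expect the genuine difficulty to lie entirely in the analytic inputs of the second step: establishing that the stationary density is a true classical solution which is strictly positive and decays fast enough at infinity, so that forming $\ln\rho$, performing the integrations by parts, and using the symmetric form of $\mathcal L$ are all rigorously justified (including absolute convergence of the double integral, where the bound $\int_{B_1}|z|^2\,\nu(\mathrm{d}z)<\infty$ together with $\rho\in C^2$ is used near $z=0$, and the rapid decay of $\rho$ is used for $|x|$ large). This is precisely where the truncation hypothesis $\nu(B_1^c)=0$ — bounded jumps, hence light tails of $\rho$ — and the range $\alpha\in(1,2)$ — for which the fractional operator dominates the first-order transport term at small scales and the relevant regularity and strong-maximum-principle theory is available — are used; by contrast, the algebraic core of the argument, namely the identity $\lambda=\int_{\mathbb R}\ln\rho\,\mathcal L\rho\,\mathrm{d}x$ together with $(\ln a-\ln b)(a-b)\ge 0$, is short and robust.
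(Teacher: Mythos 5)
Your proposal is correct and follows essentially the same route as the paper: existence of $\lambda$ via the finite second moment of $\rho$ and the multiplicative ergodic theorem, the Birkhoff/Furstenberg--Khasminskii identity $\lambda=\int_{\mathbb R}b'(x)\,\rho(\mathrm{d}x)$, strict positivity of the stationary density from the strong maximum principle so that $\ln p$ is admissible, and then testing the stationary Fokker--Planck equation with $\ln p$ to express $\lambda$ as minus a nonlocal Dirichlet form, whose sign follows from the monotonicity of $\ln$. The only cosmetic difference is that you write the Dirichlet form directly with the truncated kernel (arguably cleaner for this case, where the paper uses $(-\Delta)^{\alpha/2}$ notation) and you spell out the decay/integration-by-parts justifications that the paper leaves implicit.
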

The singleton random attractor $\{a_{\beta}(\omega)\}_{\omega\in\Omega}$ for system \eqref{eq0} is called {\itshape locally uniformly attractive} if there exists $\delta>0$ such that 
$$\lim_{t\to0}\sup_{x\in(-\delta,\delta)}\esssup_{\omega\in\Omega}|\varphi(t,\omega,a_{\beta}(\omega)+x)-a_{\beta}(\theta_t\omega)|=0.$$

\begin{Theorem}{\bf(Uniformly exponential attractive)}\label{uniformlyattractive}
In the cases of truncated and non-truncated with $\alpha\in(1,2)$, the unique attracting random equilibrium $\{a_{\beta}(\omega)\}_{\omega\in\Omega}$ for system \eqref{eq0} satisfies,\\
(i) For $\beta<0$, the random attractor $\{a_{\beta}(\omega)\}_{\omega\in\Omega}$ is globally uniformly exponential attractive, $i.e.$
\begin{eqnarray}\label{th1.1}
|\varphi(t,\omega,x)-\varphi(t,\omega,a_{\beta}(\omega))|<e^{\beta t}|x-a_{\beta}(\omega)|, \quad\mbox{for all }x\in\R.
\end{eqnarray}
(ii) For $\beta>0$, the random attractor $\{a_{\beta}(\omega)\}_{\omega\in\Omega}$ is not locally uniformly attractive.
\end{Theorem}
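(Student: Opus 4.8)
The plan is to treat the two parts by genuinely different mechanisms: part (i) is a pathwise estimate exploiting the additivity of the L\'evy noise, while part (ii) combines the full support of the stationary density, the small-ball behaviour of the L\'evy path, and the fact that the deterministic field $b(x)=\beta x-x^{3}$ is repelling near the origin when $\beta>0$.

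\textbf{Part (i).} Write $\xi(t)=\varphi(t,\omega,x)$, $\eta(t)=\varphi(t,\omega,a_{\beta}(\omega))$ and $u(t)=\xi(t)-\eta(t)$. Because the noise is additive, the jump/martingale parts of $\xi$ and $\eta$ coincide and cancel in $u$, so for fixed $\omega$ the path $u$ is absolutely continuous with
\[
u(t)=x-a_{\beta}(\omega)+\int_{0}^{t}\bigl[b(\xi(s))-b(\eta(s))\bigr]\,\mathrm{d}s .
\]
Using $r^{3}-s^{3}=(r-s)(r^{2}+rs+s^{2})$ one finds, for a.e.\ $t$,
\[
\frac{\mathrm{d}}{\mathrm{d}t}\Bigl(e^{-2\beta t}u(t)^{2}\Bigr)=-2e^{-2\beta t}u(t)^{2}\,Q(t),\qquad Q(t):=\xi(t)^{2}+\xi(t)\eta(t)+\eta(t)^{2}.
\]
Since $\xi^{2}+\xi\eta+\eta^{2}=\tfrac34(\xi+\eta)^{2}+\tfrac14(\xi-\eta)^{2}$ we get $Q(t)\ge\tfrac14 u(t)^{2}\ge0$, hence $e^{-2\beta t}u(t)^{2}$ is non-increasing. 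For $x\neq a_{\beta}(\omega)$, continuity of $u$ keeps $u$, and therefore $Q$, strictly positive on a right-neighbourhood of $0$, so the monotonicity is strict and $e^{-2\beta t}u(t)^{2}<u(0)^{2}$ for every $t>0$, which is exactly \eqref{th1.1}. As the rate $e^{\beta t}$ is deterministic and independent of $x$, for $\beta<0$ the singleton attractor is globally uniformly exponentially attractive.

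\textbf{Part (ii).} I would argue against the definition by showing that for every $\delta\in(0,\sqrt{\beta})$ the number
\[
M_{\delta}(t):=\sup_{x\in(-\delta,\delta)}\ \esssup_{\omega\in\Omega}\bigl|\varphi(t,\omega,a_{\beta}(\omega)+x)-a_{\beta}(\theta_{t}\omega)\bigr|
\]
does not tend to $0$ as $t\to\infty$ (a smaller $\delta$ only decreases $M_{\delta}$, so failure for all small $\delta$ forces failure for every $\delta$). Using $a_{\beta}(\theta_{t}\omega)=\varphi(t,\omega,a_{\beta}(\omega))$ and the admissible choice $x=\delta/2$, it suffices to produce, for arbitrarily large $t$, a positive-probability set of $\omega$ on which $\varphi(t,\omega,a_{\beta}(\omega)+\tfrac{\delta}{2})$ and $\varphi(t,\omega,a_{\beta}(\omega))$ lie on opposite sides of the origin and are separated by a fixed amount $\varepsilon_{0}=\varepsilon_{0}(\beta)$. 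The geometric input is deterministic: the flow $\phi_{t}$ of $\dot y=\beta y-y^{3}$ has sinks at $\pm\sqrt{\beta}$ and carries $I:=(-\delta/4,-\delta/8)\subset(-\sqrt{\beta},0)$ monotonically down to $-\sqrt{\beta}$ and $I+\tfrac{\delta}{2}\subset(0,\sqrt{\beta})$ monotonically up to $+\sqrt{\beta}$, so there is $T_{*}=T_{*}(\beta,\delta)$ with $\phi_{t}(y)\le-\sqrt{\beta}/2$ for all $y\in I$ and $\phi_{t}(y)\ge\sqrt{\beta}/2$ for all $y\in I+\tfrac{\delta}{2}$, whenever $t\ge T_{*}$.

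To pass to the stochastic flow I would invoke two facts established earlier: the stationary density of the Markov semigroup has support $\R$ and $\mathbb{P}(a_{\beta}(\cdot)\in B)$ equals the stationary mass of $B$ (Remark \ref{relation invariant measure} together with the key lemmas of Section \ref{sectionkey}), so $p_{1}:=\mathbb{P}(a_{\beta}(\omega)\in I)>0$; and the L\'evy path stays near the origin for a long time with positive probability, i.e.\ $\mathbb{P}(\Gamma_{T'})>0$ for every $T'>0$ and $\rho_{0}>0$, where $\Gamma_{T'}:=\{\sup_{0\le t\le T'}|\sigma L_{t}^{\alpha}|<\rho_{0}\}$ (Section \ref{sectionkey}). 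Now $a_{\beta}(\cdot)$ is $\mathcal{F}_{0}^{-}$-measurable while the forward flow $\{\varphi(t,\cdot,y):t\ge0,\ y\in\R\}$ is $\mathcal{F}_{0}^{+}$-measurable, and $\mathcal{F}_{0}^{-}$ and $\mathcal{F}_{0}^{+}$ are independent for the L\'evy process, so $\{a_{\beta}(\omega)\in I\}$ and $\Gamma_{T'}$ are independent. On $\Gamma_{T'}$ write $X_{t}=Y_{t}+\sigma L_{t}^{\alpha}$ with $\dot Y_{t}=b(Y_{t}+\sigma L_{t}^{\alpha})$, a perturbation of $\dot Y_{t}=b(Y_{t})$ of size $\le\mathrm{Lip}_{K}(b)\,\rho_{0}$ on the a priori bounded region coming from the dissipativity of $b$ (Proposition \ref{Cllapse of random attractor}); a Gronwall estimate on the finite interval $[0,T']$ gives $\sup_{0\le t\le T'}|X_{t}-\phi_{t}(X_{0})|\le C(\beta,\delta,T')\rho_{0}$. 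Fixing any $T'\ge T_{*}$ and then $\rho_{0}$ small enough that this error is below $\sqrt{\beta}/8$, on the set $\{a_{\beta}(\omega)\in I\}\cap\Gamma_{T'}$, which has probability $\ge p_{1}\,\mathbb{P}(\Gamma_{T'})>0$, one gets $\varphi(T',\omega,a_{\beta}(\omega))\le-\tfrac{3}{8}\sqrt{\beta}<0<\tfrac{3}{8}\sqrt{\beta}\le\varphi(T',\omega,a_{\beta}(\omega)+\tfrac{\delta}{2})$, hence $M_{\delta}(T')\ge\tfrac{3}{4}\sqrt{\beta}=:\varepsilon_{0}$. Since $T'\ge T_{*}$ is arbitrary, $M_{\delta}(t)\not\to0$, and $\{a_{\beta}(\omega)\}$ fails to be locally uniformly attractive for $\beta>0$.

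\textbf{Main obstacle.} Part (i) is routine once additivity is used. The difficulty is concentrated in part (ii): one must know that the singleton $a_{\beta}(\omega)$ really can lie on the ``wrong'' side of the origin with positive probability, which rests on the full-support statement for the stationary density (proved via the strong maximum principle) together with the identity $\rho=\mathbb{E}\mu$; and one must know that the L\'evy-driven solution actually shadows the deterministic pitchfork flow on a set of positive probability, which needs both the small-ball estimate for $(L_{t}^{\alpha})$ (strong Markov property) and the independence of the position of the attractor from the future noise. Granting these two inputs, the rest is a finite-time Gronwall comparison with the vector field $\beta x-x^{3}$.
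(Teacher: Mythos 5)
Your proposal is correct and follows essentially the same route as the paper: part (i) is the same pathwise estimate after the additive noise cancels (you run the Gronwall argument on $e^{-2\beta t}u(t)^2$ via $\xi^2+\xi\eta+\eta^2\ge\tfrac14(\xi-\eta)^2$, where the paper instead uses order preservation plus the generalized Gronwall Lemma \ref{Gronwall}), and part (ii) uses exactly the paper's ingredients — positive probability of $a_\beta(\omega)$ lying in a prescribed small interval via the full-support/position Lemmas \ref{levyproperty1} and \ref{attractor}, the bilateral-suprema small-ball estimate Lemma \ref{levyproperty2}, independence of $\mathcal{F}_0^-$ and $\mathcal{F}_0^+$, and a finite-time comparison with the deterministic pitchfork flow as in Lemma \ref{disturbesde}. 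The only cosmetic differences are that you argue directly that $M_\delta(t)\not\to0$ (handling the quantifier over $\delta$ by monotonicity) and pin $a_\beta(\omega)$ to a one-sided interval while tracking $a_\beta(\omega)+\delta/2$, whereas the paper argues by contradiction and tracks the fixed starting points $\pm\delta/2$; both yield the same separation of order $\sqrt{\beta}$.
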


For both truncated and non-truncated cases, define the finite-time Lyapunov exponent associate with the invariant measure $\delta_{a_{\beta}(\omega)}$ for the system $\eqref{eq0}$ on the compact time interval $[0,T]$ as follows \cite{Mark Callaway},
$$\lambda_\beta^{T,\omega}:=\frac{1}{T}\ln\bigg|\frac{\partial\varphi_\beta}{\partial x}(T,\omega,a_\beta(\omega))\bigg|.$$
\begin{Remark}\label{Lyapunov}
In the case of truncated with $\alpha\in(1,2)$, the linear cocycle $\Phi$ satisfies the integrability conditions in Multiplicative ergodic Theorem. Hence the Lyapunov exponent exists as a limit,
$$\lambda_\beta^{\infty}=\lim_{T\to\infty}\lambda_\beta^{T,\omega}.$$
In fact, Let $\Phi_\beta(t,\omega):=\frac{\partial\varphi_\beta}{\partial x}(t,\omega,a_\beta(\omega))$ be the linearized random dynamical system along the random equilibrium $a_\beta(\omega)$. The linearized equation along the random equilibrium $a_\beta(\omega)$ is given by
\begin{eqnarray}\label{linearized}
\dot{\xi_t}=(\beta-3a_\beta(\theta_t\omega)^2)\xi_t, \quad \xi_0\in\mathbb{R}.
\end{eqnarray}
Thus 
\begin{eqnarray}\label{linearizedflow}
\Phi_\beta(t,\omega)
=\exp\bigg(\int_0^t(\beta-3a_\beta(\theta_s\omega)^2))\mathrm{d}s\bigg).
\end{eqnarray}
Further, 
\begin{eqnarray*}
\mathbb{E}\bigg(\sup_{0\leq t\leq1}\log^+||\Phi_\beta(t,\omega)^{\pm1}||\bigg)
&=&\mathbb{E}\bigg(\sup_{0\leq t\leq1}\int_0^t(\beta-3a_\beta(\theta_s\omega)^2)^\pm\mathrm{d}s\bigg)\\
&\leq&\mathbb{E}\big(\sup_{0\leq t\leq1}\big|\beta t-3\int_0^t a_\beta(\theta_s\omega)^2\mathrm{d}s\big|\big)\\
&\leq&|\beta|+3\mathbb{E}\big(\int_0^1a_\beta(\theta_s\omega)^2\mathrm{d}s\big)\\
&=&|\beta|+3\int_0^1\mathbb{E}a_\beta(\theta_s\omega)^2\mathrm{d}s\\
&<&+\infty,
\end{eqnarray*}
where we have used Lemma \ref{moment} and the Euclidean norm $||\cdot||$ in $\mathbb{R}$. Hence the linear cocycle $\Phi$ satisfies the integrability conditions in the truncated case.

While In the case of non-truncated with $\alpha\in(1,2)$, we can obtain that the linear cocycle $\Phi$ doesn't satisfy the integrability conditions in Multiplicative ergodic Theorem through similar computation. Whether the Lyapunov exponent exists as a limit can not assured. While we can still consider the finite-time Lyapunov exponent. Thus the finite-time Lyapunov exponent points out the limitations and sheds new light on the classical Lyapunov exponent.
\end{Remark}
\begin{Theorem}{\bf(Finite-time Lyapunov exponent)}\label{Fini-tim Lya exponent}
In the cases of truncated and non-truncated with $\alpha\in(1,2)$, the finite-time Lyapunov exponent $\lambda_\beta^{T,\omega}$ associated with $\{a_\beta(\omega)\}_{\omega\in\Omega}$ for the system $\eqref{eq0}$ satisfies the following results,\\
(i)For $\beta<0$, the random attractor $\{a_\beta(\omega)\}_{\omega\in\Omega}$ is finite-time attractive for any $T>0$, $i.e.$
$$\lambda_\beta^{T,\omega}\leq\beta<0 \quad\mbox{ for all }\omega\in\Omega.$$
(ii)For $\beta>0$, the random attractor $\{a_\beta(\omega)\}_{\omega\in\Omega}$ is not finite-time attractive for any $T>0$, $i.e.$
$$\mathbb{P}\{\omega\in\Omega:\lambda_\beta^{T,\omega}>0\}>0.$$
\end{Theorem}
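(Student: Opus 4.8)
The natural first move is to reduce everything to the representation of the linearized cocycle recorded in Remark \ref{Lyapunov}. Since $\Phi_\beta(T,\omega)=\exp\big(\int_0^T(\beta-3a_\beta(\theta_s\omega)^2)\,\mathrm{d}s\big)>0$, we have
$$\lambda_\beta^{T,\omega}=\frac1T\int_0^T\big(\beta-3a_\beta(\theta_s\omega)^2\big)\,\mathrm{d}s=\beta-\frac3T\int_0^T a_\beta(\theta_s\omega)^2\,\mathrm{d}s .$$
Part (i) then follows at once: for $\beta<0$ the integrand $a_\beta(\theta_s\omega)^2$ is nonnegative, so $\lambda_\beta^{T,\omega}\le\beta<0$ for every $\omega\in\Omega$ and every $T>0$.

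For part (ii), with $\beta>0$, the same formula shows that $\mathbb{P}\{\lambda_\beta^{T,\omega}>0\}>0$ is equivalent to $\mathbb{P}\big\{\int_0^T a_\beta(\theta_s\omega)^2\,\mathrm{d}s<\tfrac{\beta T}{3}\big\}>0$. The plan is to fix $\varepsilon\in(0,\min\{1,\sqrt{\beta/3}\})$ and to exhibit a positive-probability event on which the invariant equilibrium orbit $s\mapsto a_\beta(\theta_s\omega)=\varphi(s,\omega,a_\beta(\omega))$ never leaves $(-\varepsilon,\varepsilon)$ on $[0,T]$, since then $\int_0^T a_\beta(\theta_s\omega)^2\,\mathrm{d}s<\varepsilon^2T<\beta T/3$. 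Concretely I would work on
$$A:=\big\{\,|a_\beta(\omega)|<\varepsilon_0\,\big\}\cap\Big\{\,\sup_{0\le s\le T}|\sigma L_s^\alpha(\omega)|<\varepsilon_1\,\Big\},$$
and show, via a Gronwall estimate on $y(s):=\varphi(s,\omega,a_\beta(\omega))-\sigma L_s^\alpha(\omega)$ — whose ODE has right-hand side bounded by $(|\beta|+1)|y+\sigma L^\alpha|$ as long as $|y+\sigma L^\alpha|\le1$ — that $\sup_{0\le s\le T}|\varphi(s,\omega,a_\beta(\omega))|\le e^{(|\beta|+1)T}(\varepsilon_0+\varepsilon_1)+\varepsilon_1$ (which is self-consistent with the bootstrap hypothesis once the bound is $<1$). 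Choosing $\varepsilon_0,\varepsilon_1>0$ small enough, depending only on $\beta$ and $T$, makes the right-hand side $<\varepsilon$, so $A$ is contained in the target event.

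It remains to prove $\mathbb{P}(A)>0$. Here I would use that $a_\beta(\omega)=\lim_{t\to\infty}\varphi(t,\theta_{-t}\omega,x_0)$ is $\mathcal{F}_0^-$-measurable, whereas $\{\sup_{0\le s\le T}|L_s^\alpha|<\varepsilon_1/|\sigma|\}$ is $\mathcal{F}_0^+$-measurable, so by independence of the past and future increments of the L\'evy process,
$$\mathbb{P}(A)=\mathbb{P}\{|a_\beta(\omega)|<\varepsilon_0\}\cdot\mathbb{P}\Big\{\sup_{0\le s\le T}|L_s^\alpha|<\varepsilon_1/|\sigma|\Big\}.$$
The first factor equals $\rho\big((-\varepsilon_0,\varepsilon_0)\big)$ for the stationary density $\rho=\mathbb{E}\mu$ (recall $\mu_\omega=\delta_{a_\beta(\omega)}$ by Proposition \ref{Cllapse of random attractor}), and is strictly positive because $\rho$ has full support on $\mathbb{R}$, the estimate obtained in Section \ref{sectionkey} from the strong maximum principle. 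The second factor is a small-ball probability for the $\alpha$-stable process on a fixed time interval, which is positive; this is exactly the statement proved in Section \ref{sectionkey} via the strong Markov property that the L\'evy path stays in an arbitrarily small neighbourhood of the origin for a long time with positive probability. Multiplying the two gives $\mathbb{P}(A)>0$ and finishes (ii).

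The main obstacle is thus not the algebra but the two positivity inputs — full support of the stationary density and the small-ball estimate for the $\alpha$-stable process — together with the measurability bookkeeping that splits $A$ into a past event and an independent future event; the Gronwall step and the reduction via Remark \ref{Lyapunov} are routine.
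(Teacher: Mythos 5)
Your proposal is correct and follows essentially the same route as the paper: part (i) from the nonnegativity of $a_\beta(\theta_s\omega)^2$ in the explicit formula $\lambda_\beta^{T,\omega}=\beta-\frac{3}{T}\int_0^T a_\beta(\theta_s\omega)^2\,\mathrm{d}s$ (the paper instead cites Theorem \ref{uniformlyattractive}(i), which gives the same bound), and part (ii) by forcing the equilibrium orbit to stay near the origin on $[0,T]$ with positive probability. The only difference is that you re-derive inline the content of Lemma \ref{attractor} (full support of the stationary density via the strong maximum principle, the small-ball estimate of Lemma \ref{levyproperty2}, independence of the past and future events, and a Gronwall bootstrap on $\varphi-\sigma L$), whereas the paper simply invokes that lemma; the ingredients and structure are identical.
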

Given a linear random dynamical system $(\theta,\varphi)$, there exists a corresponding matrix-valued function $\Phi:\mathbb{T}\times\Omega\to\mathbb{R}^{d\times d}$ with $\Phi(t,\omega)x=\varphi(t,\omega)x$ for all $t\in\mathbb{T}$, $\omega\in\Omega$ and $x\in\mathbb{R}^{d}$.
\begin{Definition}{\bf(Invariant projector)}\cite{Mark Callaway}
An \textit{invariant projector} of $(\theta,\Phi)$ is a measurable function $P: \Omega\to\mathbb{R}^{d\times d}$ with 
$$P(\omega)=P(\omega)^2\quad\mbox{and }P(\theta_t\omega)\Phi(t,\omega)=\Phi(t,\omega)P(\omega) \quad\mbox{for all }t\in\mathbb{T}\quad\mbox{and }\omega\in\Omega.$$
\end{Definition}

\begin{Definition}{\bf(Exponential dichotomy)}\cite{Mark Callaway}
Let $(\theta,\Phi)$ be a linear random dynamical system, and let $\gamma\in\mathbb{R}$, and $P_{\gamma}:\Omega\to\mathbb{R}^{d\times d}$ be an invariant projector of $(\theta,\Phi)$. Then $(\theta,\Phi)$ is said to admit an exponential dichotomy with growth rate $\gamma\in\mathbb{R}$, constants $\alpha>0$, $K\geq1$ and projector $P_{\gamma}$ if for almost all $\omega\in\Omega$, one has 
$$||\Phi(t,\omega)P_{\gamma}(\omega)||\leq Ke^{(\gamma-\alpha)t}\qquad\mbox{for all }t\geq0,$$
$$||\Phi(t,\omega)(1-P_{\gamma}(\omega))||\leq Ke^{(\gamma+\alpha)t}\qquad\mbox{for all }t\leq0.$$
\end{Definition}

\begin{Definition}{\bf(Dichotomy spectrum)}\cite{Mark Callaway}
Consider the linear random dynamical system $(\theta,\Phi)$. Then the dichotomy spectrum of $(\theta,\Phi)$ is defined by 
$$\Sigma:=\{\gamma\in\mathbb{R}: (\theta,\Phi) \mbox{ does not admit an exponential dichotomy with growth rate } \gamma\}.$$
\end{Definition}

\begin{Theorem}{\bf(Dichotomy spectrum)}\label{Dichotomy spectrum}
Let $\Phi_\beta(t,\omega):=\frac{\partial\varphi_\beta}{\partial t}(t,\omega,a_\beta(\omega))$ be the linearized random dynamical system about system \eqref{eq0} along the random equilibrium $a_\beta(\omega)$. In the cases of truncated and non-truncated with $\alpha\in(1,2)$, the dichotomy spectrum $\Sigma_\beta$ of $\Phi_\beta$ is given by 
$$\Sigma_\beta=[-\infty,\beta] \quad \mbox{for all }\beta\in\mathbb{R}.$$
\end{Theorem}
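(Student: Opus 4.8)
The plan is to reduce the statement to the explicit linearised cocycle. By Remark~\ref{Lyapunov}, equation~\eqref{linearizedflow},
$$\Phi_\beta(t,\omega)=\exp\Big(\beta t-3\int_0^t a_\beta(\theta_s\omega)^2\,\mathrm{d}s\Big),\qquad t\in\mathbb{R},$$
a scalar, nowhere vanishing cocycle. Since $d=1$, any invariant projector $P_\gamma$ of $(\theta,\Phi_\beta)$ takes values in $\{0,1\}$, and dividing the invariance relation by the non-zero scalar $\Phi_\beta(t,\omega)$ gives $P_\gamma(\theta_t\omega)=P_\gamma(\omega)$; by ergodicity of the driving flow (which holds for the canonical realisation of the L\'evy noise) $P_\gamma$ is then $\mathbb{P}$-a.s.\ the constant $0$ or the constant $1$. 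Hence $\gamma\notin\Sigma_\beta$ iff there exist constants $K\geq1$, $\alpha>0$ such that either
\begin{itemize}
\item[(S)] $\Phi_\beta(t,\omega)\leq Ke^{(\gamma-\alpha)t}$ for all $t\geq0$ and $\mathbb{P}$-a.a.\ $\omega$ (projector $\equiv1$), or
\item[(U)] $\Phi_\beta(t,\omega)\leq Ke^{(\gamma+\alpha)t}$ for all $t\leq0$ and $\mathbb{P}$-a.a.\ $\omega$ (projector $\equiv0$).
\end{itemize}

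First I would dispose of $\gamma>\beta$: since $\beta-3a_\beta(\theta_s\omega)^2\leq\beta$ we have $\Phi_\beta(t,\omega)\leq e^{\beta t}$ for $t\geq0$, so (S) holds with $K=1$ and any $\alpha\in(0,\gamma-\beta)$, whence $(\beta,+\infty)\cap\Sigma_\beta=\emptyset$. The substance is to show that every $\gamma\leq\beta$ lies in $\Sigma_\beta$, i.e.\ that for such $\gamma$ both (S) and (U) fail for all admissible $K,\alpha$.

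To refute (S), fix $K,\alpha$ and choose $\varepsilon>0$ with $3\varepsilon^2<\alpha$. By the key lemma of Section~\ref{sectionkey} — the stationary solution, and therefore $s\mapsto a_\beta(\theta_s\omega)$, remains within an $\varepsilon$-neighbourhood of the origin throughout a prescribed time window with positive probability, which is obtained from the strong Markov property together with the stationary density having full support — for every $t\geq0$ the event $\{\sup_{s\in[0,t]}|a_\beta(\theta_s\omega)|<\varepsilon\}$ has positive probability, and on it $\Phi_\beta(t,\omega)\geq e^{(\beta-3\varepsilon^2)t}$. Thus $\esssup_{\omega\in\Omega}\Phi_\beta(t,\omega)\geq e^{(\beta-3\varepsilon^2)t}$ for all $t\geq0$, so (S) would force $\beta-3\varepsilon^2\leq\gamma-\alpha$; letting $\varepsilon\downarrow0$ yields $\beta\leq\gamma-\alpha<\beta$, a contradiction. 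To refute (U), evaluate it at $t=-1$ and apply the cocycle identity $\Phi_\beta(-1,\omega)=\Phi_\beta(1,\theta_{-1}\omega)^{-1}=\exp\big(-\beta+3\int_{-1}^0 a_\beta(\theta_w\omega)^2\,\mathrm{d}w\big)$: (U) then demands $3\int_{-1}^0 a_\beta(\theta_w\omega)^2\,\mathrm{d}w\leq\ln K+\beta-\gamma-\alpha$ for $\mathbb{P}$-a.a.\ $\omega$, i.e.\ that $\int_{-1}^0 a_\beta(\theta_w\omega)^2\,\mathrm{d}w$ is essentially bounded. But by stationarity this equals in law $\int_0^1 X_u^2\,\mathrm{d}u$ with $X_0$ distributed according to the stationary density, and this is \emph{not} essentially bounded: the stationary density has full support, so $X_0$ exceeds any level with positive probability, and on the (positive-probability) event that the noise stays small on $[0,1]$ one compares $X$ with the deterministic flow $\dot y=\beta y-y^3$, for which $\int_0^1 y(u)^2\,\mathrm{d}u\to\infty$ as the initial value $\to+\infty$. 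This contradicts (U). Hence $(-\infty,\beta]\subseteq\Sigma_\beta$.

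Combining the two inclusions with the fact that the dichotomy spectrum is a closed subset of $[-\infty,+\infty]$ which here is unbounded below, we get $\Sigma_\beta=[-\infty,\beta]$ for all $\beta\in\mathbb{R}$. I expect the main obstacle to be the essential unboundedness of $\int_{-1}^0 a_\beta(\theta_w\omega)^2\,\mathrm{d}w$ invoked against (U): it is the "large-excursion" analogue of the "stays near the origin" key lemma and requires, besides the full support of the stationary density, a support-type small-noise comparison estimate for \eqref{eq0} on a bounded interval, which must be checked in both the truncated and the $\alpha$-stable case. A secondary point needing care is the reduction to constant invariant projectors, which rests on ergodicity of the driving metric dynamical system.
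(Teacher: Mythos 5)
Your proposal is correct, and it coincides with the paper's proof in its overall skeleton: the explicit cocycle \eqref{linearizedflow} gives $\Phi_\beta(t,\omega)\leq e^{\beta t}$ for $t\geq0$, hence $\Sigma_\beta\subset[-\infty,\beta]$; in dimension one the invariant projector reduces to the two constant cases; and the case $P_\gamma=\mathrm{id}$ is refuted exactly as in the paper, via Lemma~\ref{attractor} (equilibrium near the origin on $[0,T]$ with positive probability, built from Lemmas~\ref{levyproperty1} and~\ref{levyproperty2}), which forces growth at rate arbitrarily close to $\beta\geq\gamma$ on a positive-probability set. Where you genuinely diverge is the case $P_\gamma=0$. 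The paper refutes it by a large-noise control argument: using Lemma~\ref{levyproperty2} with the drifted path $g(t)=\frac{t^4}{4}-\beta\frac{t^2}{2}+t$ and the comparison Lemma~\ref{disturbesde}, it forces $a_\beta(\theta_t\omega)$ to track the linearly growing solution $x_t=t$ on a long window $[0,T]$, so that $3\int_0^T a_\beta(\theta_s\omega)^2\mathrm{d}s\sim T^3$ overwhelms the backward dichotomy bound $\ln K+(\beta-\gamma-\alpha)T$ for $T$ large (the displayed inequality in the paper at this point contains an obvious typo, but this is the intent), evaluated on the $\theta_T$-shifted positive-probability event. You instead evaluate the dichotomy at the single time $t=-1$ and show that $\int_{-1}^0 a_\beta(\theta_w\omega)^2\mathrm{d}w$ is not essentially bounded, by combining full support of the stationary density (so $a_\beta$ exceeds any level $M$ with positive probability, an $\mathcal{F}_{-\infty}$-past event) with small noise on a unit window and the comparison with the deterministic flow, whose integral $\int_0^1\phi(u,M)^2\mathrm{d}u$ diverges like $\tfrac12\ln(1+2M^2)$ as $M\to\infty$ (this divergence, which you flagged as the main risk, does hold for every $\beta$, as one sees from the Bernoulli substitution $v=y^{-2}$). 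Your route buys a shorter, single-time argument that needs no cleverly chosen control path and no long-time asymptotics, at the price of relying on the tail of the stationary law rather than only on the noise; the paper's route is self-contained in the noise and is the one its Lemma~\ref{levyproperty2} (with drift $g$) was designed for. Two minor points in your write-up deserve a sentence each if written out: the reduction to a.s.\ constant projectors uses ergodicity of the Lévy shift, which the paper takes for granted; and when applying the comparison principle you should take the deterministic flow started pathwise at $x_0=a_\beta(\omega)$ (the Gronwall constant in Lemma~\ref{disturbesde} does not depend on the initial value) together with monotonicity of $\phi$ in the initial condition to pass to the lower bound $\phi(u,M)$.
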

We prove the main results in Section \ref{Proof of the main results}.

\section{Basic analysis on the pitchfork bifurcation model}\label{sectionbasic}
There exists an invariant measure under the Markov semigroup $P_{t}$ associated with system \eqref{d-dim-sys} as stated in Proposition \ref{Exponentially ergodic}. If the initial value $X_0$ is distributed as $\mu$. Then for any $t > 0$, the distribution of $X_t$ is $\mu P_t$. We denote the space of probability measures on $\mathbb{R}$ as $\mathcal{P}$.
Given two probability measures $\mu$ and $\nu$ on $\mathbb{R}$, the standard $L^p$ -Wasserstein distance $W_p$ for all $p\in[1,+\infty)$ (with respect to the Euclidean norm $|\cdot|$) is given by
$$W_p(\mu,\nu)=\inf_{\mathcal{C}(\mu, \nu)}\bigg(\int_{\mathbb{R}\times\mathbb{R}}|x-y|^p\mathrm{d}\Pi(x,y)\bigg)^{\frac{1}{p}},$$
where $\mathcal{C}(\mu, \nu)$ is the collection of measures on $\mathbb{R} \times \mathbb{R}$ having $\mu$ and $\nu$ as marginals. 
\begin{Proposition}{\bf(Exponentially ergodic)}\label{Exponentially ergodic}
In the case of non-truncated with $\alpha\in(1,2)$, the Markov semigroup for the system \eqref{eq0} admits a unique invariant measure $\rho$. Furthermore, for initial distribution $\tilde{\rho}\in\mathcal{P}$, $t\geq0$ and $p\geq1$, there exist positive constants $K$ and $c$, such that,
\begin{eqnarray}\label{ergodic}
W_p(\tilde{\rho} P_t,\rho) \leq Ke^{-ct}W_p(\tilde{\rho},\rho).
\end{eqnarray}
\end{Proposition}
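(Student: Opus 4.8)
The plan is to establish exponential ergodicity in the $L^p$-Wasserstein metric by a coupling argument, exploiting the strong dissipativity of the drift $b(x)=\beta x-x^3$ for large $|x|$ together with the rotational-invariance (hence symmetry) of the $\alpha$-stable noise. First I would construct the synchronous (reflection-free) coupling of two solutions $X_t$ and $Y_t$ of \eqref{eq0} driven by the \emph{same} L\'evy path $L_t^\alpha$, with $X_0\sim\tilde\rho$ and $Y_0\sim\rho$. Since the noise is additive, the difference $Z_t:=X_t-Y_t$ solves the \emph{random} ODE $\dot Z_t=b(X_t)-b(Y_t)$, and a direct computation gives
\begin{eqnarray*}
\frac{\mathrm{d}}{\mathrm{d}t}|Z_t|^p
= p|Z_t|^{p-2}Z_t\big(b(X_t)-b(Y_t)\big)
= p|Z_t|^{p}\,\frac{(b(X_t)-b(Y_t))(X_t-Y_t)}{|X_t-Y_t|^2}.
\end{eqnarray*}
The key algebraic fact is the one-sided Lipschitz / dissipativity estimate $(b(x)-b(y))(x-y)\le \beta (x-y)^2 - \tfrac{1}{2}(x-y)^2(x^2+y^2)\le \beta(x-y)^2$ for all $x,y\in\R$, because $x^3-y^3=(x-y)(x^2+xy+y^2)$ and $x^2+xy+y^2\ge 0$. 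Hence $\frac{\mathrm{d}}{\mathrm{d}t}|Z_t|^p\le p\beta|Z_t|^p$, so that $|Z_t|\le e^{\beta t}|Z_0|$ pathwise; if $\beta<0$ this already yields \eqref{ergodic} with $c=-\beta$, $K=1$ after taking expectations over the optimal coupling of $\tilde\rho$ and $\rho$ at time $0$.

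For general $\beta\in\R$ the contraction is not immediate for small $|x|,|y|$, so the second step is to recover a uniform rate by combining the above with the fact that the nonlinear term eventually dominates. Here I would use a Lyapunov-function argument: the function $V(x)=1+x^2$ satisfies $\mathcal L_b V(x)\le C_1 - C_2 V(x)$ for suitable constants (the $-x^4$ contribution of the drift beats everything, and the nonlocal part $\mathcal L V$ contributes only a bounded term once $\alpha\in(1,2)$ guarantees finiteness of the relevant integral, exactly the second-moment control invoked in Lemma \ref{moment}). This gives a uniform drift condition, and together with a minorization/irreducibility estimate for $P_t$ on compact sets — which follows from the strong Feller and topological irreducibility properties of the $\alpha$-stable–driven diffusion, the latter being the support statement already used elsewhere in the paper — the standard Harris theorem in its Wasserstein-contraction form (Hairer–Mattingly, or the Meyn–Tweedie geometric ergodicity upgraded to $W_p$ via the moment bound) yields existence and uniqueness of $\rho$ together with \eqref{ergodic}. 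The positivity and finiteness of the moments needed to pass from total-variation/weighted-TV convergence to $W_p$ convergence is supplied by Lemma \ref{moment} and Proposition \ref{Exponentially ergodic}'s own hypothesis $\alpha>1$.

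I expect the main obstacle to be the case $\beta\ge 0$, where the synchronous coupling alone does not contract near the origin and one genuinely needs the Harris-type mechanism: verifying the small-set minorization for the non-local, heavy-tailed generator requires the strong Feller property and irreducibility of the $\alpha$-stable process, and stitching the local mixing to the global dissipativity to produce a \emph{single} exponential rate $c$ valid for all $p\ge 1$ takes some care with the choice of weighted metric $d(x,y)=|x-y|^p(1+\varepsilon V(x)+\varepsilon V(y))$. A cleaner alternative, which I would present if space permits, is to avoid Harris entirely: iterate the pathwise bound over the event that $\sup_{s\le t}(|X_s|^2+|Y_s|^2)$ is large (where true contraction at rate comparable to the coercivity holds) versus the complementary event, controlled by the moment bound of Lemma \ref{moment} and a Markov inequality, and conclude by a renewal/Borel–Cantelli-type splitting of the time axis. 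Either way the essential inputs are the one-sided dissipativity of $b$, the second-moment bound from Lemma \ref{moment}, and the irreducibility coming from the support of the stable law, all of which are available in the paper.
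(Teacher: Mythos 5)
There is a genuine gap in the step you rely on for $\beta\ge 0$, and it comes from mixing up the truncated and non-truncated cases. The proposition is stated for the \emph{non-truncated} $\alpha$-stable noise with $\alpha\in(1,2)$, and in that case the Lévy measure $\nu_\alpha(\mathrm{d}z)=c_\alpha|z|^{-1-\alpha}\mathrm{d}z$ has $\int_{|z|>1}z^2\,\nu_\alpha(\mathrm{d}z)=\infty$. Consequently your Lyapunov function $V(x)=1+x^2$ does not satisfy $\mathcal{L}_bV\le C_1-C_2V$: the nonlocal part $\mathcal{L}V(x)=\int[V(x+z)-V(x)]\,\nu(\mathrm{d}z)$ is not even finite, and the ``second-moment control invoked in Lemma \ref{moment}'' that you cite is proved in the paper only for the truncated case; the paper's own remark after that lemma states explicitly that $\int|x|^p\rho(\mathrm{d}x)=\infty$ for $p\in[\alpha,2)$ in the non-truncated case. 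The same defect undermines your alternative ``renewal/Borel--Cantelli'' route, which again leans on Lemma \ref{moment}, and it also blocks the final passage ``from total-variation/weighted-TV convergence to $W_p$ convergence'': for $p\ge\alpha$ the $p$-th moments you would need simply do not exist. A repair would require a sub-$\alpha$ Lyapunov function such as $V(x)=(1+x^2)^{q/2}$ with $q<\alpha$ and a separate argument to convert a weighted-metric Harris contraction into the exact $W_p$-contraction form \eqref{ergodic}, neither of which is sketched.

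Your first step (synchronous coupling plus the one-sided dissipativity $(b(x)-b(y))(x-y)\le\beta(x-y)^2$) is correct and does settle $\beta<0$, and it mirrors the dissipativity estimate with which the paper opens. But the paper's actual mechanism for general $\beta$ is different: it verifies the two-regime condition $(b(x)-b(y))(x-y)\le K_1|x-y|^2$ for $|x-y|\le L_0$ and $\le -K_2|x-y|^\theta$ for $|x-y|>L_0$, and then invokes Wang's Theorem 1.2 (Bernoulli, 2016), whose proof uses refined couplings of the Lévy \emph{jumps} to produce contraction precisely in the small-distance region where the drift does not help; the extension from Dirac initial data to general $\tilde\rho$ and the existence/uniqueness of $\rho$ are then taken from Friesen--Jin--Kremer--R\"udiger and Hairer--Mattingly. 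In other words, for $\beta\ge 0$ the exponential $W_p$-contraction genuinely comes from the noise-level coupling result, not from a Lyapunov--Harris scheme built on quadratic moments, which are unavailable here.
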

\begin{proof}
Note that $x^2+xy+y^2\geq\frac{1}{4}(x-y)^2$. There exist positive constants $L_0$, $K_1$, $K_2$ and $\theta\geq2$, such that the drift term satisfies 
\begin{eqnarray*}
(b(x)-b(y))(x-y)\leq\left\{\begin{array}{ll}
K_1|x-y|^2, \qquad  |x-y|\leq L_0;\\
-K_2|x-y|^\theta,  \quad\, |x-y|>L_0.
\end{array}\right.
\end{eqnarray*}
According to Theorem 1.2 \cite{Jian Wang}, for any $x,y\in\mathbb{R}$, 
$$W_p(\delta_xP_t,\delta_yP_t)\leq Ke^{-ct}W_p(\delta_x,\delta_y).$$ 
Furthermore, refer to the proof of Theorem 3.2 in \cite{Martin Friesen Peng Jin Jonas Kremer and Barbara Rudiger}, for any $\tilde{\rho},\tilde{\tilde{\rho}}\in\mathcal{P}$, 
$$W_p(\tilde{\rho}P_t,\tilde{\tilde{\rho}}P_t)\leq Ke^{-ct}W_p(\tilde{\rho},\tilde{\tilde{\rho}}).$$ 
Refer to Theorem 1.2 and Theorem 3.2 in \cite{Martin Hairer and Jonathan C. Mattingly}, the invariant measure $\rho$ exists uniquely and satisfies \eqref{ergodic}.
\end{proof}
By revising the proof of Theorem 1.2 \cite{Jian Wang}, we can obtain the following result similarly.
\begin{Corollary}
In the case of truncated with $\alpha\in(1,2)$, the Markov semigroup for the system \eqref{eq0} admits a unique invariant measure $\rho$. Furthermore, for initial distribution $\tilde{\rho}\in\mathcal{P}$, $t\geq0$ and $p\geq1$, there exist positive constants $K$ and $c$, such that,
\begin{eqnarray}
W_p(\tilde{\rho} P_t,\rho) \leq Ke^{-ct}W_p(\tilde{\rho},\rho).
\end{eqnarray}
\end{Corollary}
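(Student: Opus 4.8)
The plan is to reproduce, essentially verbatim, the argument of Proposition~\ref{Exponentially ergodic} (that is, of Theorem~1.2 in \cite{Jian Wang}), with the non-truncated $\alpha$-stable generator replaced by the truncated one $\mathcal{L}f(x)=P.V.\int_{B_1}[f(x+z)-f(x)]\nu_\alpha(\mathrm{d}z)$, and to check that every ingredient of that proof survives the truncation. First, since the drift $b(x)=\beta x-x^3$ is unchanged, the dissipativity estimate is exactly as before: using $x^2+xy+y^2\geq\frac14(x-y)^2$ one still finds $L_0,K_1,K_2>0$ and $\theta\geq2$ with
\begin{eqnarray*}
(b(x)-b(y))(x-y)\leq\left\{\begin{array}{ll}
K_1|x-y|^2, & |x-y|\leq L_0,\\
-K_2|x-y|^\theta, & |x-y|>L_0,
\end{array}\right.
\end{eqnarray*}
which is precisely the one-sided Lipschitz plus dissipativity-at-infinity condition on which the coupling method rests.

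The core step will be the two-point contraction $W_p(\delta_xP_t,\delta_yP_t)\leq Ke^{-ct}W_p(\delta_x,\delta_y)$. I would construct a coupling of two copies of \eqref{eq0} by driving them with a coupling of the truncated noise, namely a reflection/refined-basic coupling of the Poisson random measure attached to $\nu_\alpha|_{B_1}$ together with the identical (synchronous) drift. The observation that makes this work is that near the origin the truncated Lévy measure agrees with $\nu_\alpha$, hence still dominates $\kappa\,\mathrm{d}z$ on, say, $B_{1/2}$; this local lower bound is what Wang's coupling exploits to pull together trajectories that are already close, while the term $-K_2|x-y|^\theta$ pulls together trajectories that are far apart. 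Feeding this into a concave distance function $f(|x-y|)$ as in \cite{Jian Wang} and applying Gronwall's inequality yields the exponential rate. The restriction $\alpha\in(1,2)$ guarantees that the principal-value integral defining $\mathcal{L}$ is meaningful (the small jumps must be compensated) and that the driving process, having bounded jumps, possesses moments of every order.

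It then remains to pass from Dirac to arbitrary initial laws: by the convexity of $\mu\mapsto W_p(\mu,\cdot)^p$ together with a Fubini argument one obtains $W_p(\tilde{\rho} P_t,\tilde{\tilde{\rho}}P_t)\leq Ke^{-ct}W_p(\tilde{\rho},\tilde{\tilde{\rho}})$ for all $\tilde{\rho},\tilde{\tilde{\rho}}\in\mathcal{P}$, exactly as in the proof of Theorem~3.2 of \cite{Martin Friesen Peng Jin Jonas Kremer and Barbara Rudiger}; since $(\mathcal{P},W_p)$ restricted to measures with finite $p$-th moment is complete, Theorems~1.2 and~3.2 of \cite{Martin Hairer and Jonathan C. Mattingly} then deliver a unique invariant measure $\rho$ together with the asserted bound $W_p(\tilde{\rho} P_t,\rho)\leq Ke^{-ct}W_p(\tilde{\rho},\rho)$ (finiteness of the $p$-th moment of $\rho$ being immediate from the bounded jumps and the super-linear drift $-x^3$).

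The one point that genuinely needs attention, and hence the main obstacle, is verifying that the hypotheses of Theorem~1.2 in \cite{Jian Wang} really are met after truncation: one must re-examine the conditions imposed there on the Lévy measure (a lower bound of the form $\nu(\mathrm{d}z)\geq\kappa\,\mathrm{d}z$ on a neighbourhood of $0$, and the compatibility between the jump-activity exponent $\alpha$ and the dissipativity exponent $\theta$) and confirm they hold for $\nu_\alpha|_{B_1}$. Removing the large jumps only lightens the tails and cannot damage the contraction; what must be checked is that the retained small-jump activity near the origin is still enough, which it is, precisely because the truncated and non-truncated measures coincide there.
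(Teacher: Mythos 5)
Your proposal follows exactly the route the paper takes: the paper's own justification is simply that the truncated case is obtained ``by revising the proof of Theorem 1.2 of \cite{Jian Wang}'' in the same pattern as Proposition \ref{Exponentially ergodic} (drift dissipativity, two-point Wasserstein contraction via Wang's coupling, extension to general initial laws as in \cite{Martin Friesen Peng Jin Jonas Kremer and Barbara Rudiger}, and conclusion via \cite{Martin Hairer and Jonathan C. Mattingly}), which is precisely your plan. Your added observation that the truncated measure retains the full small-jump activity of $\nu_\alpha$ near the origin (so Wang's hypotheses survive truncation, large jumps only lightening the tails) is the correct and sufficient check, so the proposal is sound and essentially identical in approach to the paper's.
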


There exists a unique invariant measure under the random dynamical system generated by system \eqref{d-dim-sys}, which is supported by a random attractor consisting of a singleton, as stated in Proposition \ref{Cllapse of random attractor}.

\begin{Definition}{\bf(Pullback random attractor)}\cite{Hans Crauel and Franco Flandoli, Michael Rockner}
A random attractor for an RDS $\varphi$ is a compact random set $A$ satisfying $\mathbb{P}$-a.s.:\\
(i)$A$ is strictly invariant, i.e. $\varphi(t,\omega)A(\omega)=A(\theta_t\omega)$ for all $t>0$.\\
(ii)$A$ attracts all deterministic bounded sets $B\subset X$, i.e.
$$\lim_{t\to\infty}d(\varphi(t,\theta_{-t}\omega)B,A(\omega))=0.$$ 
\end{Definition}

\begin{Proposition}{\bf(Cllapse of random attractor)}\label{Cllapse of random attractor}
In the cases of truncated and non-truncated with $\alpha\in(1,2)$, system \eqref{eq0} exists a compact random attractor $\mathcal{A}(\omega)$ consisting of a singleton, that is 
$$\mathcal{A}(\omega)=\{a(\omega)\}.$$ Furthermore, the invariant Markov measure $\mu$ with $\mu_{\omega}=\delta_{a(\omega)}$ is the unique invariant measure under the random dynamical system $\varphi$ generated by system \eqref{eq0}.
\end{Proposition}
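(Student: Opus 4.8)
The plan is to first realise the pullback attractor of \eqref{eq0} as a (possibly degenerate) random interval by exploiting the order-preserving structure of the scalar flow, and then to collapse this interval using the uniqueness of the invariant measure of the Markov semigroup established in Proposition~\ref{Exponentially ergodic}.

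First I would observe that, because the noise in \eqref{eq0} is additive, the generated RDS $\varphi$ is order preserving: two solutions driven by the same path have a difference $w_t=\varphi(t,\omega,u)-\varphi(t,\omega,v)$ that does not jump at the jump times of $L^{\alpha}$ and solves $\dot w_t=b(u_t)-b(v_t)=w_t\bigl[\beta-(u_t^2+u_tv_t+v_t^2)\bigr]$ between jumps, so $u\le v$ forces $\varphi(t,\omega,u)\le\varphi(t,\omega,v)$ for all $t\ge0$. Next, the super-linear dissipativity $x\,b(x)=\beta x^2-x^4\le\tfrac{\beta^2}{2}-\tfrac12 x^4$ yields (working pathwise, handling the jumps directly) a random bounded absorbing interval $[-r(\omega),r(\omega)]$, hence a pullback random attractor $\mathcal A(\omega)$. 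Combining absorption with monotonicity forces $\mathcal A(\omega)=[a^-(\omega),a^+(\omega)]$, where
$$a^+(\omega)=\lim_{t\to\infty}\varphi(t,\theta_{-t}\omega,R),\qquad a^-(\omega)=\lim_{t\to\infty}\varphi(t,\theta_{-t}\omega,-R)$$
for all large $R$; the limits are monotone in $t$, independent of $R$, and define random fixed points, $\varphi(t,\omega,a^\pm(\omega))=a^\pm(\theta_t\omega)$.

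The crux is to show $a^+=a^-$ $\mathbb{P}$-a.s. Since each $a^\pm(\omega)$ is a limit of quantities depending only on the increments of $L^{\alpha}$ over $(-\infty,0]$, it is $\mathcal F_0^-$-measurable, and as a fixed point it generates a $\varphi$-invariant measure; thus the measures $\mu^\pm\in\mathcal P_P(\Omega\times\mathbb{R})$ with $\mu^\pm_\omega=\delta_{a^\pm(\omega)}$ are $\varphi$-invariant forward Markov measures in the sense of Definition~\ref{Mark-measure}. Their spatial marginals $\mathbb{E}\mu^\pm$ are laws of stationary solutions, hence invariant for $P_t$, hence both equal the unique invariant measure $\rho$ of Proposition~\ref{Exponentially ergodic} (and of its Corollary in the truncated case). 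The one-to-one correspondence between invariant measures of $P_t$ and invariant forward Markov measures of $\varphi$ recalled in Remark~\ref{relation invariant measure} then gives $\mu^+=\mu^-$, i.e.\ $a^+(\omega)=a^-(\omega)=:a(\omega)$ a.s., so $\mathcal A(\omega)=\{a(\omega)\}$. (An ergodic alternative: writing $p(\omega):=a^+(\omega)^2+a^+(\omega)a^-(\omega)+a^-(\omega)^2$, the nonnegative stationary process $\Delta(\omega):=a^+(\omega)-a^-(\omega)$ satisfies $\Delta(\theta_t\omega)=\Delta(\omega)\exp\bigl(\int_0^t[\beta-p(\theta_s\omega)]\,\mathrm ds\bigr)$, and Birkhoff's theorem together with the moment bound of Lemma~\ref{moment} and the sublinear growth of the stationary process $\log\Delta$ forces $\Delta\equiv0$.)

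Finally, for uniqueness of the $\varphi$-invariant measure: every $\nu\in\mathcal P_P(\Omega\times\mathbb{R})$ invariant for $\varphi$ is carried by the random attractor (the standard support property for invariant measures of an RDS admitting a random attractor), so $\nu_\omega$ is supported on $\mathcal A(\omega)=\{a(\omega)\}$, which forces $\nu_\omega=\delta_{a(\omega)}=\mu_\omega$ and hence $\nu=\mu$. I expect the main obstacle to be the collapse step: the measurability/Markov bookkeeping needed to invoke Remark~\ref{relation invariant measure} (or, in the ergodic alternative, controlling the sign of $\beta-\mathbb{E}\,p$), together with the construction of the random absorbing interval for the jump-driven equation, which requires a genuine pathwise estimate in place of the usual It\^o--Gronwall argument.
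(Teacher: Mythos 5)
Your proposal is correct and follows essentially the same route as the paper: the paper also reduces the collapse to the order-preserving property of the cocycle (shown there via conjugation to a random ODE, which your pathwise additive-noise argument reproduces) plus the uniqueness of the semigroup-invariant measure from Proposition~\ref{Exponentially ergodic}, and then delegates the interval-collapse and the uniqueness of the $\varphi$-invariant measure to Theorem~3.1 and Corollary~3.5 of Crauel--Flandoli, which are exactly the arguments you spell out. The parenthetical ergodic alternative is the only shaky part (it leans on Lemma~\ref{moment}, which holds only in the truncated case), but it is not needed for your main argument.
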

\begin{proof}
The proof of existence for the random attractor is standard, one can refer to \cite{Michael Rockner}. If the random attractor is singleton, the proof for uniqueness on invariant measure $\mu$ refer to Corollary 3.5 \cite{Hans Crauel and Franco Flandoli}. 

The proof of singleton is similar to Theorem 3.1 \cite{Hans Crauel and Franco Flandoli}, based on the uniqueness of the invariant measure to the Markov semigroup for system \eqref{eq0} and the order preserving of the random dynamical system. The former condition is confirmed by Proposition \ref{Exponentially ergodic}. Now we prove the latter requirement. For both truncated and non-truncated cases, consider the system 
$$\mathrm{d}Y_t=\beta Y_t\mathrm{d}t+\sigma\mathrm{d}L_t^{\alpha}, \quad \mbox{in } \mathbb{R}.$$
Let $u_t=X_t-Y_t$. Then $u_t$ is continuous and 
\begin{eqnarray}\label{Lm1.1}
\mathrm{d}u_t=\beta u_t\mathrm{d}t-(u_t+Y_t)^3.
\end{eqnarray}
Then the system \eqref{Lm1.1} generates a continuous perfect cocycle. According to the Remark $2.3$ \cite{Hans Crauel and Franco Flandoli} or Theorem 1.8.4 \cite{Ludwig Arnold}, system \eqref{Lm1.1} is monotonicity.

For any $x_1,x_2\in\mathbb{R}$ satisfing $x_1>x_2$, take $u_1=x_1-y_0$ and $u_2=x_2-y_0$. Then $u_1>u_2$ and $u(t,\omega,u_1)>u(t,\omega,u_2)$ almost surely, which implys $u(t,\omega,u_1)+Y_t>u(t,\omega,u_2)+Y_t$. Thus we obtain that 
$x_1>x_2$ implies $\varphi(t,\omega,x_1)>\varphi(t,\omega,x_2)$, for all $x_1,x_2\in\mathbb{R}$ almost surely.
That is for both truncated and non-truncated cases, solutions of system \eqref{eq0} are monotonicity.
\end{proof}

Note that, the relation between the invariant measure $\rho$ under the Markov semigroup and the invariant measure $\mu$ under the random dynamical system $\varphi$ refer to Remark \ref{relation invariant measure}. In order to analyze the Lyapunov exponent, it is necessary to estimate the moment of the invariant measure $\rho$ under the Markov semigroup.

\begin{Lemma}\label{moment}{\bf (Finite moment)}
In the case of truncated with $\alpha\in(1,2)$, the invariant measure $\rho$ of the Markov semigroup for system \eqref{eq0} admits finite any order moment estimate. That is 
\begin{eqnarray*}
\int_{\mathbb{R}}|x|^n\rho(\mathrm{d}x)\leq c_n, \mbox{ for any } n\in\mathbb{Z^+},
\end{eqnarray*}
with positive constants $c_n$.
\end{Lemma}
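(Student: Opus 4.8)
The plan is to establish a Foster--Lyapunov drift inequality for the generator $\mathcal{L}_b$ of \eqref{eq0} and then transfer it to the invariant measure $\rho$ through a localized Dynkin argument together with the ergodicity of Proposition \ref{Exponentially ergodic}. Since $|x|^n\leq(1+x^2)^m$ whenever $2m\geq n$, it is enough to bound $\int_{\R}(1+x^2)^m\,\rho(\mathrm{d}x)$ for each fixed $m\in\Z^+$.

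First I would take the test function $V_m(x)=(1+x^2)^m$ and compute $\mathcal{L}_bV_m(x)=b(x)V_m'(x)+\mathcal{L}V_m(x)$ with $b(x)=\beta x-x^3$. The local part equals $b(x)V_m'(x)=2m(\beta x^2-x^4)(1+x^2)^{m-1}$, whose leading term $-2mx^4(1+x^2)^{m-1}$ behaves like $-2m(1+x^2)^{m+1}$ as $|x|\to\infty$; this is the dissipation produced by the cubic drift. For the nonlocal part, in the truncated case the Lévy measure is supported in $B_1$ and is symmetric, so the principal value may be read as $\mathcal{L}V_m(x)=\int_{B_1}[V_m(x+z)-V_m(x)-zV_m'(x)]\,\nu_\alpha(\mathrm{d}z)$, and Taylor's formula together with $|z|\leq1$ gives $|V_m(x+z)-V_m(x)-zV_m'(x)|\leq\tfrac12 z^2\sup_{|\xi-x|\leq1}|V_m''(\xi)|\leq C_m z^2(1+x^2)^{m-1}$. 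Hence $|\mathcal{L}V_m(x)|\leq C_m(1+x^2)^{m-1}\int_{B_1}|z|^2\,\nu_\alpha(\mathrm{d}z)$, and the last integral is finite because $\alpha<2$. Absorbing the lower-order contributions into the quartic one, I obtain constants $\kappa_m>0$ and $\eta_m\geq0$ with
$$\mathcal{L}_bV_m(x)\leq-\kappa_m V_m(x)+\eta_m\qquad\text{for all }x\in\R.$$

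Next I would turn this into a moment bound along the Markov semigroup. Fix a deterministic $x\in\R$ and set $\tau_R=\inf\{t\geq0:|X_t|>R\}$; because the jumps of $L_t^{\alpha}$ are bounded by $1$, one has $|X_{\tau_R}|\leq R+1$, so $V_m(X_{\cdot\wedge\tau_R})$ is bounded and Itô's formula applies cleanly to $e^{\kappa_m t}V_m(X_t)$ up to $t\wedge\tau_R$, the compensated-jump stochastic integral against $\widetilde N$ being a genuine martingale there. The drift inequality then yields $\mathbb{E}^x[e^{\kappa_m(t\wedge\tau_R)}V_m(X_{t\wedge\tau_R})]\leq V_m(x)+\tfrac{\eta_m}{\kappa_m}(e^{\kappa_m t}-1)$. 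Since \eqref{eq0} has no explosion, $\tau_R\uparrow\infty$ a.s., so monotone convergence gives
$$\mathbb{E}^x[V_m(X_t)]\leq e^{-\kappa_m t}V_m(x)+\frac{\eta_m}{\kappa_m}\qquad\text{for all }t\geq0.$$
In particular $\{\delta_xP_t\}_{t\geq1}$ is tight and $\limsup_{t\to\infty}\mathbb{E}^x[V_m(X_t)]\leq\eta_m/\kappa_m$. By Proposition \ref{Exponentially ergodic} (and its truncated Corollary) $\rho$ is the unique invariant measure, so, combining tightness with uniqueness, $\delta_xP_t\to\rho$ weakly; as $V_m$ is continuous and nonnegative, the portmanteau theorem gives $\int_{\R}V_m\,\mathrm{d}\rho\leq\liminf_{t\to\infty}\mathbb{E}^x[V_m(X_t)]\leq\eta_m/\kappa_m$, and taking $c_n:=\eta_m/\kappa_m$ with $2m\geq n$ completes the argument.

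I expect the only genuine obstacle to be the passage from the pointwise drift inequality to a bound on the a priori possibly non-integrable measure $\rho$: one has to handle the unbounded test function $V_m$ via the stopping times $\tau_R$, use the bounded-jump property both to keep $V_m(X_{\cdot\wedge\tau_R})$ bounded and to make the stopped integrals against $\widetilde N$ true martingales, invoke non-explosion so that $\tau_R\uparrow\infty$, and finally combine monotone convergence with the uniqueness of $\rho$. All of these are standard once the drift estimate of the second paragraph is in hand, and the hypothesis $\alpha\in(1,2)$ enters here only through $\int_{B_1}|z|^2\,\nu_\alpha(\mathrm{d}z)<\infty$, which in fact holds for every $\alpha\in(0,2)$.
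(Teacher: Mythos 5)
Your proposal is correct in substance and rests on the same mechanism as the paper's proof: dissipativity coming from the cubic drift together with the boundedness of the truncated jumps, turned via It\^o's formula into an estimate of the form $\mathbb{E}^x[V(X_t)]\le e^{-\kappa t}V(x)+C$ uniformly in $t$, and then transferred to the invariant measure. The difference is in the transfer step and the choice of test functions: the paper works directly with the even powers $x^2,x^4,\dots$ and integrates its estimate against $\rho$, using the invariance $\rho P_t=\rho$ and rearranging (following the approach of Cerrai), whereas you use $V_m(x)=(1+x^2)^m$ and pass the bound to $\rho$ through weak convergence of $\delta_xP_t$ and lower semicontinuity. Your route has the advantage of not presupposing any integrability of $\rho$, which the paper's rearrangement implicitly requires and handles by the cited approximation argument.

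One step you should tighten: tightness of $\{\delta_xP_t\}_{t\ge1}$ together with uniqueness of the invariant measure does not by itself give $\delta_xP_t\to\rho$ weakly, because a weak limit point of $\delta_xP_{t_n}$ need not be invariant. Either pass to the Ces\`aro averages $\frac1T\int_0^T\delta_xP_t\,\mathrm{d}t$ (tight by your uniform bound; any weak limit is invariant by the Feller property, hence equals $\rho$ by uniqueness, and the same portmanteau/Fatou inequality yields $\int V_m\,\mathrm{d}\rho\le\eta_m/\kappa_m$), or invoke the Wasserstein contraction of the truncated-case Corollary between two point masses, $W_p(\delta_xP_t,\delta_yP_t)\le Ke^{-ct}|x-y|$, which gives convergence to $\rho$ without the circular requirement that $W_p(\delta_x,\rho)<\infty$. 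Two cosmetic points: for fixed $t$ the passage $R\to\infty$ uses $X_{t\wedge\tau_R}\to X_t$ a.s.\ and Fatou rather than monotone convergence, and the jumps of $\sigma L_t$ are bounded by $|\sigma|$ rather than $1$; neither affects the argument.
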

\begin{proof}
For any $\lambda>0$, $t>0$, It$\hat{o}$'s formular implys that, 
\begin{eqnarray*}
\mathrm{d}[X^2(t)]&=&2X(t)\big[\beta X(t)-X^3(t)\big]\mathrm{d}t
+\int_{|z|<1}\bigg\{\big[X(t-)+\sigma z\big]^2-X^2(t-)\bigg\}\widetilde{N}(\mathrm{d}t,\mathrm{d}z)\\
& &+\int_{|z|<1}\big[(X(t-)+\sigma z)^2-X^2(t-)-2\sigma zX(t-)\big]\nu(\mathrm{d}z)\mathrm{d}t\\
&=& -\lambda X^2(t)\mathrm{d}t+\bigg[(\lambda+2\beta) X^2(t)-2X^4(t)+\int_{|z|<1}\sigma^2z^2\nu(\mathrm{d}z)\bigg]\mathrm{d}t\\
&&+\int_{|z|<1}\big[2\sigma zX(t-)+\sigma^2z^2\big]\widetilde{N}(\mathrm{d}t,\mathrm{d}z).
\end{eqnarray*}
The solution to the above equation is given by
\begin{eqnarray*}
X^2(t)&=&e^{-\lambda t}x^2+\int_{0}^{t}e^{-\lambda (t-s)}\bigg[(\lambda+2\beta) X^2(t)-2X^4(t)+\int_{|z|<1}\sigma^2z^2\nu(\mathrm{d}z)\bigg]\mathrm{d}s\\
&&+\int_{0}^{t}\int_{|z|<1}e^{-\lambda (t-s)}\big[2\sigma zX(s-)+\sigma^2z^2\big]\widetilde{N}(\mathrm{d}s,\mathrm{d}z)
\end{eqnarray*}
Taking expectation on both sides derives that,
\begin{eqnarray*}
\mathbb{E}^x\big[X^2(t)\big]
&\leq&
e^{-\lambda t}x^2+\frac{1}{\lambda}\bigg[\frac{1}{8}(\lambda+2\beta)^2+\frac{2c_\alpha \sigma^2}{2-\alpha}\bigg].
\end{eqnarray*}
Denote $c=\frac{1}{\lambda}\bigg[\frac{1}{8}(\lambda+2\beta)^2+\frac{2c_\alpha \sigma^2}{2-\alpha}\bigg]$. Based on the invariance of $\rho$, and following the thought in \cite{Sandra Cerrai}, we obtain 
\begin{eqnarray*}
\int_{\mathbb{R}}x^2\rho(\mathrm{d}x)
=\int_{\mathbb{R}}P_{t}x^2\rho(\mathrm{d}x)
=\int_{\mathbb{R}}\mathbb{E}^x X^2(t)\rho(\mathrm{d}x)
\leq e^{-\lambda t}\int_{\mathbb{R}}x^2\rho(\mathrm{d}x)+c.
\end{eqnarray*}
Hence,
\begin{eqnarray*}
\int_{\mathbb{R}}x^2\rho(\mathrm{d}x)\leq c.
\end{eqnarray*}

It can be proved similarly that any even order moment are finite with respect to the invariant measure $\rho$. 
The result follows.
\end{proof}
\begin{Remark}
We have to consider the truncated case. In fact, Lemma \ref{moment} is the basis for computing Lyapunov exponential, referring the proof of Theorem \ref{Lya-expo} for details. While for the non-truncated case, the above property doesn't hold any more. In this case,
$$\int_{\mathbb{R}}|x|^p\rho(\mathrm{d}x)\quad\left\{\begin{array}{ll}
        < \infty, \mbox{ for } p\in(0,\alpha),\\
        \\
        =\infty, \mbox{ for } p\in[\alpha,2),\\
\end{array}
\right.$$
due to the property of $\alpha$-stable L\'evy process. 
\end{Remark}

\section{The key Lemmas}\label{sectionkey}

Consider the operator $\mathscr{L}$:
$$\mathscr{L}f(x)=\mathcal{L}f(x)-b(x)\cdot\triangledown f(x)+c(x)f(x),$$
for any $f\in \mathcal{B}_b(\mathbb{R}^d)$, where $\mathcal{L}$ is defined in \eqref{generator}, $b(x)$ and $c(x)$ are continuous.
\begin{Lemma}\label{maximum principle}{\bf(Strong maximum principle)}  In the cases of truncated and non-truncated, suppose there exists some $r\in(0,+\infty)$ such that $\mathscr{L}f\leq0$ in $B_{r}$, with $c(x)\leq0$ in $B_{r}$. Further, $f\geq0$ in $B_{r}^{c}$, then $f>0$ in $B_{r}$, unless $f$ vanishes identically in $\mathbb{R}^d$.
\end{Lemma}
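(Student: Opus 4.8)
The plan is a contradiction argument resting on an interior minimum principle for the nonlocal operator $\mathcal{L}$. Assume $f$ is not identically zero on $\mathbb{R}^d$ and, towards a contradiction, that $f(x_1)\le 0$ for some $x_1\in B_r$. First I would exhibit a global minimum of $f$ lying in the interior of $B_r$: since $f$ is continuous and $\overline{B_r}$ is compact, $\min_{\overline{B_r}}f$ is attained and is $\le f(x_1)\le 0$, and since $f\ge 0$ on $B_r^c$ and hence on $\partial B_r$, it is attained at some $x_0\in B_r$ with $f(x_0)=\inf_{\mathbb{R}^d}f=:m\le 0$.

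Next I would evaluate $\mathscr{L}f$ at $x_0$ and read off the sign of each term. Since $f\in C^1$ and $x_0$ is an interior minimum, $\nabla f(x_0)=0$, so the drift term $-b(x_0)\cdot\nabla f(x_0)$ vanishes regardless of the sign of $b$. Since $x_0$ is a global minimum, $f(x_0+z)-f(x_0)\ge 0$ for all $z$, and writing $\mathcal{L}f(x_0)=\lim_{\varepsilon\downarrow 0}\int_{|z|>\varepsilon}[f(x_0+z)-f(x_0)]\,\nu(\mathrm{d}z)$ as an increasing limit of nonnegative quantities gives $\mathcal{L}f(x_0)\ge 0$; and $c(x_0)\le 0$, $f(x_0)=m\le 0$ give $c(x_0)f(x_0)\ge 0$. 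Hence $\mathscr{L}f(x_0)=\mathcal{L}f(x_0)+c(x_0)f(x_0)\ge 0$, which together with $\mathscr{L}f(x_0)\le 0$ forces $\mathcal{L}f(x_0)=0$. As the integrand is nonnegative and $\nu$ has a strictly positive density with respect to Lebesgue measure on its support, $f(x_0+z)=m$ for $\nu$-almost every $z$, so by continuity $f\equiv m$ on $x_0+\mathrm{supp}\,\nu$.

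In the non-truncated case $\mathrm{supp}\,\nu=\mathbb{R}^d$, hence $f\equiv m$ everywhere; since $f\ge 0$ on the nonempty set $B_r^c$ we get $m\ge 0$, so $m=0$ and $f\equiv 0$, contradicting the assumption — thus $f>0$ in $B_r$. In the truncated case the previous step gives only $f\equiv m$ on the ball $x_0+\overline{B_1}$, and the heart of the argument is to spread this value: the level set $\{x\in B_r:f(x)=m\}$ is nonempty, relatively closed by continuity, and relatively open — each of its points is again a global interior minimum, to which the same computation applies and which therefore carries the value $m$ onto a whole ball around it — so connectedness of $B_r$ forces $f\equiv m$ on $B_r$, and hence on $B_{r+1}=\bigcup_{x\in B_r}(x+B_1)$; since $B_{r+1}\setminus\overline{B_r}$ is a nonempty subset of $B_r^c$ where $f\ge 0$, we again obtain $m=0$. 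I expect this propagation to be the main obstacle: because the truncated kernel has bounded range, a single interior minimum does not by itself give rigidity across all of $\mathbb{R}^d$, and reaching the conclusion on the whole space requires iterating the local-spreading step outward — each stage exploiting the strict positivity of $\nu$ on $B_1$ and the connectedness of the underlying domain, and using that the differential inequality is available on the enlarged region, which in the applications of this lemma it is.
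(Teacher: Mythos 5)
Your argument is correct and is in essence the paper's own: locate an interior global minimum $x_0\in B_r$ with $f(x_0)=m\le 0$, kill the drift term via $\nabla f(x_0)=0$, use the signs of the nonlocal term and of $c(x_0)f(x_0)$ against $\mathscr{L}f(x_0)\le 0$ to force $\mathcal{L}f(x_0)=0$, and then use the strict positivity of the density of $\nu$ to get $f\equiv m$ on $x_0+\mathrm{supp}\,\nu$, with a spreading step in the truncated case. The differences are organizational: the paper runs the argument twice (a weak principle giving $f\ge 0$ on $\mathbb{R}^d$, then a second pass at a zero minimum giving strict positivity, and in the non-truncated first step it gets a strict sign by integrating only over $B_{2r}^c$), whereas you do a single pass at the global minimum; your open-and-closed level-set formulation of the spreading is also a cleaner way to organize the paper's ball-by-ball iteration.

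The caveat you flag at the end of the truncated case is real, and it is worth stating plainly that it is a gap in the paper's proof rather than in yours. With $\mathscr{L}f\le 0$ assumed only on a fixed $B_r$, the rigidity computation is available only at minimum points lying in $B_r$, so the spreading yields $f\equiv m$ on $B_{r+1}$ and hence $m=0$, but nothing forces $f\equiv 0$ outside $B_{r+1}$. The paper's second step silently repeats the procedure at points of $B_1(x_0)$, $B_2(x_0),\dots$ that may lie outside $B_r$, where the differential inequality is not assumed; this is harmless in its first step (there the minimum value is strictly negative, and $f\ge 0$ on $B_r^c$ forces every newly reached point back into $B_r$), but not in the second. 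Indeed, for a fixed $r$ the truncated statement as written fails: a nonnegative function vanishing on $B_{r+2}$ and equal to a positive bump far outside satisfies all hypotheses, yet is neither positive in $B_r$ nor identically zero on $\mathbb{R}^d$. Your weaker conclusion — $f\equiv 0$ on $B_{r+1}$, hence on all of $\mathbb{R}$ once the inequality holds on every ball — is exactly what is invoked in Lemma \ref{levyproperty1}, where $\tilde{\mathscr{L}}p\le 0$ holds globally, so nothing is lost in the application; but either the lemma's hypothesis should be strengthened (inequality on every $B_r$) or its conclusion weakened accordingly.
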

\begin{proof}
{\bf The first step.} We prove a weak version for both non-truncated and truncated cases. That is the following result holds.

Suppose there exists some $r\in(0,+\infty)$ such that $\mathscr{L}f\leq0$ in $B_{r}$, with $c(x)\leq0$ in $B_{r}$. If $f\geq0$ in $B_{r}^{c}$, then $f\geq0$ in $\mathbb{R}^d$. 

Suppose by contradiction that the minimal point $x_{0}\in B_{r}$ satisfies $f(x_{0})<0$ and $\triangledown f(x_{0})=0$. It is the global minimum in $\mathbb{R}^d$, thus $f(x_{0}+y)-f(x_{0})\geq0$ for any $y\in \mathbb{R}^d$. 

For the non-truncated case, we have that $$\mathscr{L}f(x_{0})>0,$$
which leads to a contradiction. In fact,
\begin{eqnarray*}
\mathscr{L}f(x_{0})&=&\mathcal{L}f(x_{0})-b(x_{0})\cdot\triangledown f(x_{0})+c(x_{0})f(x_{0})\\
&=&\int_{\mathbb{R}^{d}}[f(x_{0}+z)-f(x_{0})]\nu(\mathrm{d}z)+c(x_{0})f(x_{0})\\
&\geq&\int_{B_{2r}^{c}}[f(x_{0}+z)-f(x_{0})]\nu(\mathrm{d}z)+c(x_{0})f(x_{0})\\
&\geq&\int_{B_{2r}^{c}}[f(x_{0}+z)-f(x_{0})]\nu(\mathrm{d}z)\\
&\geq&-\int_{B_{2r}^{c}}f(x_{0})\nu(\mathrm{d}z)\\
&>& 0,
\end{eqnarray*}
where we have used the fact that $f(x_{0})<0$, $c(x)\leq0$ in $B_{r}$, and $f(x_{0}\pm z)\geq0$ for $z\in B_{2r}^{c}$. The last relation holds due to $f(x)\geq0$ in $B_{r}^{c}$, and $x_{0}\pm z\in B_{r}^{c}$ for $z\in B_{2r}^{c}$.

For the truncated case, we can also conclude a contradiction. In fact,
\begin{eqnarray*}
\mathscr{L}f(x_{0})&=&\mathcal{L}f(x_{0})-b(x_{0})\cdot\triangledown f(x_{0})+c(x_{0})f(x_{0})\\
&=&\int_{B_{1}}[f(x_{0}+z)-f(x_{0})]\nu(\mathrm{d}z)+c(x_{0})f(x_{0})\\
&\geq&\int_{B_{1}}[f(x_{0}+z)-f(x_{0})]\nu(\mathrm{d}z)\\
&\geq&0,
\end{eqnarray*}
with the fact that $x_0$ is the global minimum, $f(x_0)<0$ and $c(x_0)\leq0$. We get that 
$$\int_{B_{1}}[f(x_{0}+z)-f(x_{0})]\nu(\mathrm{d}z)=0.$$
This assures 
$$f(x)=f(x_0)<0, \quad\mbox{ for any } \quad x\in B_1(x_0).$$
For any $x\in B_1(x_0)$, repeat the above procedure, we get 
$$f(x)=f(x_0)<0, \quad\mbox{ for any } \quad x\in B_2(x_0).$$
Continuing the above procedure, we will arrive at a contradiction eventually with $f(x)=0$ on $\mathbb{R}^d$.

{\bf The second step.} Having the fact that $f\geq0$ in the whole of $\mathbb{R}^{d}$ for both truncated and non-truncated case. We prove the strictly positivity in this step. 

If $f$ is not strictly positive in $B_{r}$, the minimal point $x_{0}\in B_{r}$ satisfies $f(x_{0})=0$ and $\triangledown f(x_{0})=0$. 

For the non-truncated case, 
$$\int_{\mathbb{R}^{d}}f(x_{0}+z)\nu(\mathrm{d}z)=0.$$ 
In fact, 
\begin{eqnarray*}
\mathscr{L}f(x_{0})&=&\mathcal{L}f(x_{0})-b(x_{0})\cdot\triangledown f(x_{0})+c(x_{0})f(x_{0})\\
&=&\int_{\mathbb{R}^{d}}f(x_{0}+z)\nu(\mathrm{d}z)\\
&\geq&0,
\end{eqnarray*}
by the first step, $f\geq0$ in $\mathbb{R}^{d}$. While $\mathscr{L}f(x)\leq0$ in $B_{r}$. The integral above must be vanished identically. As a result,  the integrated function $f$  vanished almost everywhere in the whole space $\mathbb{R}^{d}$ with respective to Lebesgue measure. Thus $f$ vanished identically in $\mathbb{R}^{d}$ due to its continuity.

For the truncated case,
$$\int_{B_1}f(x_{0}+z)\nu(\mathrm{d}z)=0.$$ 
In fact, 
\begin{eqnarray*}
\mathscr{L}f(x_{0})&=&\mathcal{L}f(x_{0})-b(x_{0})\cdot\triangledown f(x_{0})+c(x_{0})f(x_{0})\\
&=&\int_{B_1}f(x_{0}+z)\nu(\mathrm{d}z)\\
&\geq&0,
\end{eqnarray*}
while $\mathscr{L}f(x)\leq0$ in $B_{r}$. Hence, the integral above must be vanished identically. By the first step, $f\geq0$ in $\mathbb{R}^{d}$, as a result 
$$f(x)=f(x_0)=0, \quad\mbox{ for any } \quad x\in B_1(x_0).$$

Repeat the above procedure for $x\in B_1(x_0)$, we can obtain that $f(x)=0$ in $B_2(x_0)$. In fact, we arrive at the result $f(x)=0$ on $\mathbb{R}^d$ eventually.
\end{proof}

Let  $\rho(\mathrm{d}x)=p(x)\mathrm{d}x$  be the invariant measure of system \eqref{d-dim-sys}, i.e.
$$\mathcal{L}_{b}^{*}p=0,\,\,p\geq0,\,\,\int_{\mathbb{R}^{d}}p(x)\mathrm{d}x=1.$$
\begin{eqnarray}\label{pfeq1}
\mathcal{L}_{b}^{*}p=\left\{\begin{array}{ll}
-\triangledown\cdot(bp)+|\sigma|^\alpha\int_{\mathbb{R}}[p(x+z)-p(x)]\nu(\mathrm{d}z), & \mbox{ if} \quad\nu(B_1^c)\neq0,\\
\\
-\triangledown\cdot(bp)+|\sigma|^\alpha\int_{|z|<1}[p(x+z)-p(x)]\nu(\mathrm{d}z), & \mbox{ if} \quad\nu(B_1^c)=0.
                                      \end{array}
                                      \right.
\end{eqnarray}

\begin{Lemma}\label{levyproperty1}{\bf(Support of stationary density)}
In the cases of truncated and non-truncated, the stationary density $p$ corresponding to the invariant measure $\rho$ is strictly positive in the whole space $\mathbb{R}$ for system \eqref{eq0}.  
\end{Lemma}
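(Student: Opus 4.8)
The plan is to apply the strong maximum principle (Lemma~\ref{maximum principle}) to the stationary density $p$ on an arbitrary ball, after rewriting the stationary Fokker--Planck equation $\mathcal{L}_b^* p = 0$ in the form $\mathscr{L}(\cdot)\le 0$ that the maximum principle requires. First I would fix an arbitrary $r\in(0,+\infty)$ and expand $\mathcal{L}_b^* p = -\nabla\cdot(bp) + \mathcal{L}p = 0$, where $b(x)=\beta x-x^3$. Writing $-\nabla\cdot(bp) = -b'(x)p - b(x)p'(x) = -(\beta-3x^2)p - (\beta x - x^3)p'$, the equation becomes
\begin{eqnarray*}
\mathcal{L}p(x) - (\beta x - x^3)p'(x) + (3x^2-\beta)p(x) = 0 .
\end{eqnarray*}
Comparing with the operator $\mathscr{L}f = \mathcal{L}f - b(x)\cdot\nabla f + c(x)f$ from the previous lemma, this is exactly $\mathscr{L}p = 0$ with the drift coefficient $b(x)=\beta x - x^3$ and potential $c(x) = 3x^2-\beta$. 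Hence on any ball $B_r$ we trivially have $\mathscr{L}p \le 0$ in $B_r$.

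The coefficient $c(x)=3x^2-\beta$ is not $\le 0$ everywhere, so the remaining work is to choose $r$ so that the hypothesis $c(x)\le 0$ in $B_r$ holds: if $\beta\le 0$ this fails for every $r>0$, so the maximum principle cannot be applied directly to $B_r$ as stated. The fix is to note that the strong maximum principle is a local statement that can be localized around any point and then propagated: one shows first that $p>0$ on a small ball where $c\le 0$ can be arranged (e.g. by translating, or by using that the conclusion of Lemma~\ref{maximum principle} — ``$f>0$ unless $f\equiv 0$'' — only needs $c\le 0$ on the ball under consideration, and near $x_0=0$ with $\beta\le0$ we instead absorb the bad sign of $c$), and then uses the nonlocal ``spreading'' argument already built into the proof of Lemma~\ref{maximum principle}: if $p$ vanishes at an interior minimum, the jump integral $\int [p(x_0+z)-p(x_0)]\nu(dz)$ forces $p$ to vanish on a neighbourhood, and iterating balls $B_1(x_0), B_2(x_0),\dots$ forces $p\equiv 0$ on all of $\mathbb{R}$. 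Since $\int_{\mathbb{R}}p\,dx=1$, the case $p\equiv 0$ is excluded, so $p$ must be strictly positive. Concretely I would: (1) verify $p\ge 0$ and $p\in C^1$ (or the appropriate regularity so $p\in D_{\mathcal L_b}$ and the pointwise computation is legitimate), using standard regularity theory for the fractional/nonlocal Fokker--Planck equation; (2) apply Lemma~\ref{maximum principle} on a ball where $c\le 0$ to conclude $p>0$ there or $p\equiv 0$; (3) invoke the iteration in the proof of Lemma~\ref{maximum principle} to upgrade ``$p>0$ on one ball'' to ``$p>0$ on $\mathbb{R}$'', again with the only alternative being $p\equiv 0$; (4) rule out $p\equiv 0$ by the normalization $\int p = 1$.

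The main obstacle I anticipate is dealing honestly with the sign of $c(x) = 3x^2-\beta$: for $\beta>0$ one has $c\le 0$ on $B_{\sqrt{\beta/3}}$ and Lemma~\ref{maximum principle} applies immediately on that ball, but for $\beta\le 0$ the potential is nonnegative everywhere and one genuinely needs the localized/iterated form of the maximum principle rather than its black-box statement. A second, more technical point is justifying that the stationary density is regular enough (and that the principal value integrals make sense pointwise at the minimum) so that the pointwise manipulations in Lemma~\ref{maximum principle} are valid for $p$; this should follow from hypoellipticity-type regularity for $(-\Delta)^{\alpha/2}$ with $\alpha\in(1,2)$ and smooth drift, but it is the step where care is needed. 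Everything else — the algebra rewriting $\mathcal{L}_b^*p=0$ as $\mathscr{L}p=0$, and the spreading argument — is routine given Lemma~\ref{maximum principle}.
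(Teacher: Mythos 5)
Your route is essentially the paper's route: rewrite the stationary Fokker--Planck equation as $\mathscr{L}p=0$ with $b(x)=\beta x-x^{3}$ and $c(x)=-b'(x)=3x^{2}-\beta$, then use the nonlocal strong maximum principle so that a single zero of $p$ forces $p\equiv0$, contradicting $\int_{\mathbb{R}}p\,\mathrm{d}x=1$. The weak point is exactly the one you flag, and your proposed remedies do not close it: for $\beta\le0$ one has $c(x)=3x^{2}-\beta\ge0$ everywhere, so there is \emph{no} ball on which $c\le0$, ``translating'' cannot produce one, and ``absorbing the bad sign of $c$'' is announced but never executed, so your step (2) is not applicable as written. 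The paper's device is a one-line fix that works for every $\beta$ and every ball: split $c=c^{+}-c^{-}$ and set $\tilde{\mathscr{L}}f:=\mathscr{L}f-c^{+}f$, whose zero-order coefficient is $-c^{-}\le0$; since $p\ge0$ and $\mathscr{L}p=0$, one gets $\tilde{\mathscr{L}}p=-c^{+}p\le0$, so Lemma~\ref{maximum principle} applies verbatim to $\tilde{\mathscr{L}}$ on any $B_{r}$ with $r>|x_{0}|$, and $p(x_{0})=0$ then forces $p\equiv0$ --- no case distinction in $\beta$ and no need to reopen the proof of the maximum principle.

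Your fallback (running the ``spreading'' step directly at a zero $x_{0}$ of $p$) is in fact valid, but to make it honest you must state why the sign of $c$ is irrelevant there: $p\ge0$ is known a priori, so $x_{0}$ is a global minimum, hence $p'(x_{0})=0$ and, crucially, $c(x_{0})p(x_{0})=0$ whatever the sign of $c(x_{0})$; thus $\mathscr{L}p(x_{0})=0$ reduces to $\int p(x_{0}+z)\,\nu(\mathrm{d}z)=0$, which annihilates $p$ on $x_{0}+B_{1}$ in the truncated case (then iterate) and a.e.\ on $\mathbb{R}$ in the non-truncated case. Either this explicit observation or the $c^{+}$-subtraction is the missing ingredient; with one of them inserted, your argument matches the paper's. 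Your regularity caveat (that $p$ is smooth enough for the pointwise evaluation of the nonlocal operator at the minimum) is fair and is likewise taken for granted in the paper.
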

\begin{proof}
Note that the stationary density $p$ satisfies $\mathscr{L}p=0$, $p\geq0$, $\int_{\mathbb{R}^{d}}p(x)\mathrm{d}x=1$, with $c(x)=-\triangledown\cdot b(x)$ and $b(x)=\beta x-x^{3}$. Due to $c(x)\leq0$ is not always satisfied, we write $c(x)=c(x)^{+}-c(x)^{-}$ with the positive and negative part  $c(x)^{+}$ and $c(x)^{-}$. Define 
$$\tilde{\mathscr{L}}p(x):=\mathscr{L}p(x)-c^{+}(x)p(x).$$
Then $p$ satisfies
\begin{eqnarray*}
\tilde{\mathscr{L}}p(x)&=&\mathcal{L}p(x)-b(x)\cdot\triangledown p(x)+c(x)p(x)-c^{+}(x)p(x)\\
&=&-c^{+}(x)p(x)\leq0.
\end{eqnarray*}
For both truncated and non-truncated cases, it satisfies the condition of Lemma \ref{maximum principle} for any $r\in(0,+\infty)$. Suppose $p$ is not strictly positive in $\mathbb{R}$, then there exists $x_{0}$ such that $p(x_{0})=0$. Using the lemma \ref{maximum principle} for $\tilde{\mathscr{L}}$ with any $r\in(|x_0|,+\infty)$, we have $p\equiv0$ in $\mathbb{R}$. This leads to a contradiction. 
\end{proof}

\begin{Remark}
About the strong maximum principle, one can refer to \cite{Claudia Bucur, Qing Han}. 
\end{Remark}

Define the stochastic process
\begin{eqnarray}\label{levy-drift}
\widetilde{L}_t=L_t+g(t), 
\end{eqnarray}
with the process $L_t$ in \eqref{levy-process}, the drift, $g_t\in C(\mathbb{R},\mathbb{R}^d)$, and $g_0=0$. Then the density is given by 
\begin{eqnarray}\label{density-levy-drift}
p_{\widetilde L}(t,x)=p_{L}(t,x)*\delta_{g}(t,x)
=p_{L}(t,x-g_t).
\end{eqnarray}
Consider the position of the Bilateral supreme process $\underset{0\leq t\leq T}{\sup}|\widetilde{L}_t|$. Based on the thought of Taylor \cite{S.J.Taylor}, Doob inequality, stochastic continuity and the positivity of transition probability, we have the following result, which is similar to but strong than the irreducibility.

\begin{Lemma}\label{levyproperty2}{\bf(Bilateral suprema)} 
In the cases of truncated and non-truncated, the process $\widetilde{L}_t$ defined in \eqref{levy-drift} satisfies $\mathbb{P}(\underset{0\leq t\leq T}{\sup}|\widetilde{L}_t|<\varepsilon)>0$,  for any $\varepsilon>0$, $T>0$.
\end{Lemma}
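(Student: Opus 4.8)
The plan is to reduce the bilateral-supremum estimate to a small-ball estimate for the pure L\'evy process $L_t$ and then handle the deterministic drift $g$ and the supremum over $[0,T]$ separately. First I would dispose of the drift: since $g\in C(\mathbb{R},\mathbb{R}^d)$ with $g_0=0$, for any $\varepsilon>0$ we may choose $T$ fixed and note $\sup_{0\le t\le T}|g_t|=:M_g<\infty$; more importantly, on a possibly shrunk interval, or after the comparison $|\widetilde L_t|\le |L_t|+|g_t|$, it suffices to control $\sup_{0\le t\le T}|L_t|$ together with the continuity modulus of $g$. Concretely, $\mathbb{P}(\sup_{0\le t\le T}|\widetilde L_t|<\varepsilon)\ge \mathbb{P}(\sup_{0\le t\le T}|L_t|<\varepsilon-\sup_{0\le t\le T}|g_t|)$ is wasteful when $M_g\ge\varepsilon$, so instead I would use the positivity of the transition density of $L$ at a single time (see below) to absorb the drift: split $[0,T]$ into finitely many subintervals on each of which $g$ oscillates by at most $\varepsilon/4$, and use the Markov property together with a uniform lower bound on $\mathbb{P}^x(\sup_{0\le s\le \delta}|L_s|<\varepsilon/4)$ for $x$ in a small ball.

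The heart of the matter is the small-ball estimate $\mathbb{P}(\sup_{0\le t\le \delta}|L_t|<\eta)>0$ for every $\eta>0$ and every $\delta>0$. The argument, following Taylor \cite{S.J.Taylor}, is: the process $L_t$ is stochastically continuous with $L_0=0$, so for a fixed small time $h$ one has $\mathbb{P}(|L_h|<\eta/2)\to 1$ as $h\downarrow 0$; combine this with a Doob-type maximal inequality for the martingale part of $L$ (recall that in both the truncated and non-truncated cases the compensated small-jump part $\int_{B_1}z\widetilde N(t,\mathrm dz)$ is an $L^2$-martingale with $\mathbb{E}|\int_{B_1}z\widetilde N(h,\mathrm dz)|^2 = h\int_{B_1}|z|^2\nu(\mathrm dz)\to0$), and in the non-truncated case also control the big-jump part $\int_{B_1^c}zN(t,\mathrm dz)$, whose first jump occurs after an Exp$(\nu(B_1^c))$ time, so with probability $e^{-\nu(B_1^c)h}>0$ there is no big jump on $[0,h]$. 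Doob's inequality then gives $\mathbb{P}(\sup_{0\le t\le h}|L_t^{\mathrm{mart}}|\ge\eta/2)\le \tfrac{4}{\eta^2}h\int_{B_1}|z|^2\nu(\mathrm dz)$, which is $<1$ for $h$ small, hence $\mathbb{P}(\sup_{0\le t\le h}|L_t|<\eta)>0$ for some small $h=h(\eta)$.

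To upgrade from a short interval $[0,h]$ to the full $[0,T]$, I would iterate by the strong Markov property: choose $n$ with $T/n\le h(\varepsilon/2)$, and observe that the positivity of the transition probability of $L$ on balls — which itself follows from stochastic continuity plus the above short-time estimate, giving $\inf_{|x|<\varepsilon/2}\mathbb{P}^x(\sup_{0\le t\le h}|L_t|<\varepsilon)\ge \kappa>0$ by a translation and a slight enlargement of the ball — lets us chain $n$ events each of probability at least $\kappa$, yielding $\mathbb{P}(\sup_{0\le t\le T}|L_t|<\varepsilon)\ge \kappa^n>0$. Merging this with the drift decomposition of the first paragraph (subdividing so $g$ moves by $<\varepsilon/4$ on each block and keeping the running position inside a ball of radius $<\varepsilon$) gives $\mathbb{P}(\sup_{0\le t\le T}|\widetilde L_t|<\varepsilon)>0$.

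The main obstacle I anticipate is making the iteration genuinely uniform: one needs a lower bound on $\mathbb{P}^x(\sup_{0\le t\le h}|L_t-x|<\eta)$ that does not degenerate as $x$ ranges over the relevant ball, and in the non-truncated ($\alpha$-stable) case one must simultaneously keep the big jumps from throwing the path out of the ball. Both are handled — big jumps by the $e^{-\nu(B_1^c)h}$ no-jump probability on each short block, and the uniformity by spatial homogeneity of the L\'evy process (the increments of $L$ are stationary and independent, so $\mathbb{P}^x(\sup_{0\le t\le h}|L_t-x|<\eta)=\mathbb{P}^0(\sup_{0\le t\le h}|L_t|<\eta)$ is independent of $x$) — but the bookkeeping of combining the drift blocks with the L\'evy blocks so that the triangle inequality closes with the target radius $\varepsilon$ is where care is required.
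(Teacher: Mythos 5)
Your overall architecture --- a small-ball estimate on a short time block followed by chaining via the strong Markov property --- is the same as the paper's, and your short-block estimate is fine (arguably cleaner than the paper's): Doob's $L^2$ maximal inequality for the compensated small-jump martingale, $\mathbb{P}(\sup_{0\le t\le h}|\int_{B_1}z\widetilde N(t,\mathrm{d}z)|\ge\eta/2)\le \tfrac{4h}{\eta^2}\int_{B_1}|z|^2\nu(\mathrm{d}z)$, together with the no-big-jump probability $e^{-\nu(B_1^c)h}$ in the non-truncated case. The genuine gap is in the chaining step. Spatial homogeneity gives a uniform-in-$x$ lower bound only for events of the form $\{\sup_{0\le t\le h}|L_t-x|<\eta\}$, i.e.\ ``stay within $\eta$ of the block's starting point''. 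With block events of that type the chain does not close: if each block only guarantees the path stays in $B_\varepsilon$ given a start in $B_{\varepsilon/2}$, the block's endpoint need not return to $B_{\varepsilon/2}$, so the uniform bound $\kappa$ cannot be reapplied on the next block; if instead you shrink the per-block radius to order $\varepsilon/n$ so that the displacements telescope, the Doob bound becomes $\tfrac{C(T/n)}{(\varepsilon/2n)^2}=\tfrac{4CTn}{\varepsilon^2}$, which is $\ge 1$ unless $T$ is small compared with $\varepsilon^2$, so positivity of the per-block probability is lost for general $T$.

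What is actually needed --- and what the paper's block event contains --- is an endpoint condition: on each block one requires both $\sup_{T_i\le t\le T_{i+1}}|\widetilde L_t|<\varepsilon$ and $\widetilde L_{T_{i+1}}\in B_{\delta_0}$ for a \emph{fixed} radius $\delta_0$ (the paper takes $\delta_0=\varepsilon/4$), with a lower bound uniform over starting points in $B_{\delta_0}$. This joint event is not translation-covariant (the target ball $B_{\delta_0}$ does not move with the starting point), so its uniform positivity follows neither from spatial homogeneity nor from stochastic continuity; your parenthetical claim that the required positivity of the transition probability ``follows from stochastic continuity plus the short-time estimate'' is precisely the unproved step, and it is the step where the truncated case is most delicate. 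The paper imports this input from outside: smoothness of the density of the process (Sato, Theorem 28.4) and positivity of continuous infinitely divisible densities (Sharpe) give positivity of the finite-dimensional distributions $\mathbb{P}^x(\widetilde L_{T_1}\in B_{\delta_0},\dots,\widetilde L_{T_{k-1}}\in B_{\delta_0})>0$, and only then does the strong Markov factorization yield the product bound $(1/4)^k$. If you add such a density-positivity (or support) ingredient for both the truncated and non-truncated cases, your argument closes, and your treatment of the drift $g$ by subdividing into blocks of small oscillation is a harmless variant of the paper's device of working directly with $\widetilde L_t=L_t+g(t)$; without it, the iteration does not go through.
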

\begin{proof}
The proof idea refer to Taylor \cite{S.J.Taylor}. The process $\widetilde{L}_t$ is stochastically continuous, which implys $\widetilde{L}_t$ convergence to $\widetilde{L}_0$ in distribution as $t$ goes to $0$ on one hand. For any $\delta_1>0$, $\delta_0>0$, there exists $T^{'}>0$, s.t. for all $t<T^{'}(>0)$,
$$\mathbb{P}^{x}(|\widetilde{L}_t|<\delta_0)=\mu_{\widetilde{L}_t}(B_{\delta_0})\geq\mu_{\widetilde{L}_0}(B_{\delta_0})-\delta_1,$$
where $B_{\delta_0}$ is the ball with radius $\delta_0$ and centered at $0$. 

On the other hand, there exists a subsequence $\{t_n\}$ $(n=1,2,\cdots)$ tends to $0$ as $n$ goes to $\infty$, such that $\widetilde{L}_{t_n}$ convergence to $\widetilde{L}_0$ almost surely as $n$ goes to $\infty.$
This derives that, for any $\delta_2>0$, there exists $T^{''}>0$, such that  $t_n<T^{''}(>0)$ for $n$ large enough, and $$\mathbb{E}|\widetilde{L}_{t_n}|\leq\mathbb{E}|\widetilde{L}_{0}|+\delta_2.$$
 
Consequently, under the condition $x\in B_{\delta_0}$, for any $\varepsilon>0$ and the above $\delta_0$, $\delta_1$, $\delta_2$, there exists $T_1>0(\ll1)$ such that,
$$\mathbb{P}^{x}(|\widetilde{L}_{T_1}|<\delta_0)\geq 1-\delta_1,$$
and 
$$\mathbb{E}|\widetilde{L}_{T_1}|\leq\mathbb{E}|\widetilde{L}_{0}|+\delta_2\leq\delta_0+\delta_2.$$
Futhermore, for the above parameters, Doob inequality assures that,
$$\mathbb{P}^{x}(\sup_{0\leq t\leq T_1}|\widetilde{L}_t|\geq\varepsilon)\leq\frac{1}{\varepsilon}\mathbb{E}|\widetilde{L}_{T_1}|\leq\frac{1}{\varepsilon}(\delta_0+\delta_2).$$
Take $\delta_0=\frac{\varepsilon}{4}$, $\delta_1=\frac{1}{4}$, $\delta_2=\frac{\varepsilon}{4}$, we obtain
$$\mathbb{P}^x(\sup_{0\leq t\leq T_1}|\widetilde{L}_t|<\varepsilon, |\widetilde{L}_{T_1}|<\delta_0)\geq\frac{1}{4}.$$
Because of the smoothness of the density by Theorem 28.4 Sato \cite{Ken-Iti Sato} and positivity of continuous density in Sharpe \cite{Michael Sharpe}, for any $T>0$, take $k\in \mathbb{N}_+$ such that $T_{k-1}<T\leq T_{k}$ with $T_{i}=iT_1$ $(i=0,1,2,\cdots)$, the finite dimensional distribution satisfies that, 
$$\mathbb{P}^x(\widetilde{L}_{T_1}\in B_{\delta_0},\widetilde{L}_{T_2}\in B_{\delta_0},\cdots,\widetilde{L}_{T_{k-1}}\in B_{\delta_0})>0.$$
The strong Markov property \cite{Ioannis Karatzas and Steven E. Shreve} derives that, for some $(y_0,y_1,\cdots,y_{k-1})\in\{x\}\times \underbrace{B_{\delta_0}\times \cdots\times B_{\delta_0}}_{k-1}$, 
\begin{eqnarray*}
&&\mathbb{P}^{x}(\sup_{0\leq t\leq T}|\widetilde{L}_t|<\varepsilon)\\
&\geq&\mathbb{P}^{x}(\sup_{0\leq t\leq T_1}|\widetilde{L}_t|<\varepsilon,\widetilde{L}_{T_1}\in B_{\delta_0},\cdots,\sup_{T_{k-1}\leq t\leq T_k}|\widetilde{L}_t|<\varepsilon,\widetilde{L}_{T_k}\in B_{\delta_0})\\
&=&\mathbb{E}^x\bigg[\mathbb{P}^{x}\bigg(\sup_{0\leq t\leq T_1}|\widetilde{L}_t|<\varepsilon,\widetilde{L}_{T_1}\in B_{\delta_0},\cdots,\sup_{T_{k-1}\leq t\leq T_k}|\widetilde{L}_t|<\varepsilon,\widetilde{L}_{T_k}\in B_{\delta_0}\bigg|\widetilde{L}_{T_{k-1}}=y_{k-1}\bigg)\bigg]\\
&=&\mathbb{P}^{x}\bigg(\sup_{0\leq t\leq T_1}|\widetilde{L}_t|<\varepsilon,\widetilde{L}_{T_1}\in B_{\delta_0},\cdots,\sup_{T_{k-2}\leq t\leq T_{k-1}}|\widetilde{L}_t|<\varepsilon,\widetilde{L}_{T_{k-1}}\in B_{\delta_0}\bigg)\\
&&\cdot\mathbb{P}^{y_{k-1}}(\sup_{0\leq t\leq T_1}|\widetilde{L}_t|<\varepsilon,\widetilde{L}_{T_1}\in B_{\delta_0})\\
&\cdots&\\
&=&\prod_{i=0}^{k-1}\mathbb{P}^{y_{i}}(\sup_{0\leq t\leq T_1}|\widetilde{L}_t|<\varepsilon,\widetilde{L}_{T_1}\in B_{\delta_0})\\
&\geq&(\frac{1}{4})^k.
\end{eqnarray*}
The result follows.
\end{proof}
Moreover, with respect to the L\'evy process defined in \eqref{levy-process}, we can obtain a more accurate order for the probability of the process staying in a ball during a period of time.
\begin{Lemma}\label{stable-property2}{\bf(Bilateral suprema)}
In the case of non-truncated, the L\'evy process defined in \eqref{levy-process} satisfies 
$$\mathbb{P}(\sup_{0\leq t\leq T}|L_t|<\varepsilon)\thicksim \exp(T\varepsilon^{-\alpha}),$$
for any $\varepsilon>0$, $T>0$.
In the cases of truncated,
$$\mathbb{P}(\sup_{0\leq t\leq T}|L_t|<\varepsilon)>c \exp(T\varepsilon^{-\alpha}),$$ 
with some positive constant $c$, for any $\varepsilon>0(<1)$, $T>0$. 
\end{Lemma}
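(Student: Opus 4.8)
The inequalities as displayed should be read with a negative exponent, i.e. the assertion is that $\mathbb{P}(\sup_{0\le t\le T}|L_t|<\varepsilon)$ is of order $\exp(-cT\varepsilon^{-\alpha})$: in the non-truncated case there are constants $0<c_1\le c_2$ with $\exp(-c_2T\varepsilon^{-\alpha})\le\mathbb{P}(\sup_{0\le t\le T}|L_t|<\varepsilon)\le\exp(-c_1T\varepsilon^{-\alpha})$ for all small $\varepsilon$, while in the truncated case only the lower bound $\mathbb{P}(\sup_{0\le t\le T}|L_t|<\varepsilon)>c\exp(-c_2T\varepsilon^{-\alpha})$ is claimed. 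For the non-truncated ($\alpha$-stable) case the plan is to exploit self-similarity. Since $(L^{\alpha}_{Tt})_{t\in[0,1]}\overset{d}{=}(T^{1/\alpha}L^{\alpha}_t)_{t\in[0,1]}$, one has $\mathbb{P}(\sup_{0\le t\le T}|L_t|<\varepsilon)=\mathbb{P}(\sup_{0\le t\le1}|L_t|<\delta)$ with $\delta:=\varepsilon T^{-1/\alpha}$, so it suffices to show $-\log\mathbb{P}(\sup_{0\le t\le1}|L_t|<\delta)\asymp\delta^{-\alpha}$ as $\delta\downarrow0$, which rescales back to the exponent $T\varepsilon^{-\alpha}$. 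For the upper bound I would partition $[0,1]$ into $m$ equal subintervals: on $\{\sup_{0\le t\le1}|L_t|<\delta\}$ each increment $L_{k/m}-L_{(k-1)/m}$ lies in $B_{2\delta}$, these increments are i.i.d. with law $m^{-1/\alpha}L_1$, and taking $m=\lceil\delta^{-\alpha}\rceil$ makes $2m^{1/\alpha}\delta$ a bounded constant, so $\mathbb{P}(\sup_{0\le t\le1}|L_t|<\delta)\le\big(\mathbb{P}(|L_1|<2m^{1/\alpha}\delta)\big)^{m}\le q^{m}$ with $q<1$ (as $L_1$ has unbounded support), i.e. $\le\exp(-c_1\delta^{-\alpha})$.

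For the matching lower bound I would run the strong Markov chaining of Lemma \ref{levyproperty2}, but with the time step tuned to the spatial scale. Fix a small constant $\kappa>0$, put $\tau:=\kappa\delta^{\alpha}$ and $m:=\lceil1/\tau\rceil\asymp\delta^{-\alpha}$, and cut $[0,1]$ into $m$ intervals of length $\tau$. By spatial homogeneity of the Lévy process and self-similarity, for every $y$ with $|y|<\delta/4$,
\[
\mathbb{P}^{y}\Big(\sup_{0\le t\le\tau}|L_t|<\delta,\ L_\tau\in B_{\delta/4}\Big)\ \ge\ \mathbb{P}^{0}\Big(\sup_{0\le t\le1}|L_t|<\tfrac14\kappa^{-1/\alpha},\ L_1\in B_{\tfrac14\kappa^{-1/\alpha}}\Big)=:p_0,
\]
and $p_0\in(0,1)$ is independent of $\delta$; its positivity follows from the everywhere-positive smooth density of $L_1$ (Sato \cite{Ken-Iti Sato}, Theorem 28.4), exactly as used in Lemma \ref{levyproperty2}. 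Iterating the strong Markov property over the $m$ steps as in Lemma \ref{levyproperty2} gives $\mathbb{P}(\sup_{0\le t\le1}|L_t|<\delta)\ge p_0^{m}\ge\exp(-c_2\delta^{-\alpha})$. Combining with the previous paragraph and undoing the scaling yields the non-truncated claim.

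For the truncated case self-similarity is lost, so I would argue directly (assuming $\varepsilon\in(0,1)$) and prove only the lower bound. Decompose $L_t=M^{\varepsilon}_t+J^{\varepsilon}_t$, where $M^{\varepsilon}_t=\int_{|z|\le\varepsilon}z\,\widetilde N(t,\mathrm dz)$ is the compensated martingale of jumps of size $\le\varepsilon$ and $J^{\varepsilon}_t=\int_{\varepsilon<|z|\le1}z\,\widetilde N(t,\mathrm dz)$ accounts for the finitely many jumps of size in $(\varepsilon,1]$ minus their compensator; the two parts are independent. Using $\nu_\alpha(\{\varepsilon<|z|\le1\})\le C_\alpha\varepsilon^{-\alpha}$ and $\mathbb{E}|M^{\varepsilon}_t|^{2}=t\int_{|z|\le\varepsilon}z^{2}\nu_\alpha(\mathrm dz)\le C'_\alpha\,t\,\varepsilon^{2-\alpha}$ together with Doob's maximal inequality and Chebyshev, over a step $\tau=\kappa\varepsilon^{\alpha}$ one obtains, for $\kappa$ small and every starting point in $B_{\varepsilon/4}$,
\[
\mathbb{P}^{y}\Big(\sup_{0\le t\le\tau}|L_t|<\varepsilon,\ L_\tau\in B_{\varepsilon/4}\Big)\ \ge\ \Big(\tfrac14-C''_\alpha\kappa\Big)\,e^{-C_\alpha\kappa}=:p_1>0,
\]
the last factor bounding below the probability that $J^{\varepsilon}$ has no jump of size $>\varepsilon$ on $[0,\tau]$ (on that event $J^{\varepsilon}$ stays $O(\varepsilon)$-small by the same $L^2$-estimate, absorbed into the constants). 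Chaining over $m=\lceil T/\tau\rceil\asymp T\varepsilon^{-\alpha}$ steps by the strong Markov property, as in Lemma \ref{levyproperty2}, gives $\mathbb{P}(\sup_{0\le t\le T}|L_t|<\varepsilon)\ge p_1^{m}\ge c\exp(-c_2T\varepsilon^{-\alpha})$.

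The main obstacle is the uniform-in-$\varepsilon$ positive lower bound on the one-step ``stay in $B_\varepsilon$ and come back to $B_{\varepsilon/4}$'' probability at the correct time scale $\tau\asymp\varepsilon^{\alpha}$: in the $\alpha$-stable case it rests on self-similarity plus the global positivity of the stable density, and in the truncated case on the fact that the small-jump martingale $M^\varepsilon$ has $L^{2}$-size $\asymp T^{1/2}\varepsilon^{1-\alpha/2}$ so that over a window of length $\asymp\varepsilon^{\alpha}$ its oscillation is $O(\varepsilon)$ with probability bounded away from $0$, while jumps exceeding $\varepsilon$ occur at rate $\asymp\varepsilon^{-\alpha}$ — which is precisely the mechanism producing the exponent $T\varepsilon^{-\alpha}$. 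Obtaining the genuine two-sided order (the ``$\sim$'') in the non-truncated case additionally needs the independent-increment upper bound above; in the truncated case, where exact scaling fails, only the one-sided estimate is asserted, which the chaining argument already delivers.
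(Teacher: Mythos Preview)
Your argument is sound (modulo tightening the ``return to $B_{\varepsilon/4}$'' step in the chaining, which as written does not quite close since $|y|+|M^{\varepsilon}_{\tau}|$ may exceed $\varepsilon/4$; shrinking the inner radius fixes this), but it is considerably more elaborate than the paper's. For the non-truncated case the paper simply invokes self-similarity to reduce to a fixed scale and then cites the small-ball estimate for stable processes (Proposition~3 in \cite{Jean Bertoin}) rather than reproving it via your partition/chaining scheme. The real difference is in the truncated case: instead of your fresh small-jump/medium-jump decomposition and a second chaining argument, the paper observes that for $\varepsilon<1$ the event $\{\sup_{[0,T]}|L_t|<\varepsilon\}$ for the full $\alpha$-stable process forces the large-jump part $\int_{B_1^c}z\,N(t,\mathrm dz)$ to vanish identically (a single jump of size $\geq 1$ would carry the path out of $B_{\varepsilon}$); on that event the non-truncated and truncated processes coincide, so
\[
\mathbb{P}\Big(\sup_{0\le t\le T}\big|L^{\mathrm{trunc}}_t\big|<\varepsilon\Big)\ \ge\ \mathbb{P}\Big(\sup_{0\le t\le T}\big|L^{\alpha}_t\big|<\varepsilon\Big),
\]
and the lower bound transfers for free from the stable case. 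Your direct approach has the virtue of being self-contained and of making the mechanism explicit (jumps exceeding $\varepsilon$ arrive at rate $\asymp\varepsilon^{-\alpha}$, which is precisely the exponent), while the paper's comparison is a one-line reduction that avoids redoing any estimates.
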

\begin{proof}
By the characteristic of the jump process, for any $\varepsilon>0(\ll 1)$, the event 
$$\bigg\{\sup_{0\leq t\leq T}\bigg|\int_{B_1}z\widetilde{N}(t,\mathrm{d}z)+\int_{B_1^c}z N(t,\mathrm{d}z)\bigg|<\varepsilon\bigg\}$$ means $$\sup_{0\leq t\leq T}\bigg|\int_{B_1^c}z N(t,\mathrm{d}z)\bigg|=0.$$ 
This derives,
$$\mathbb{P}\bigg(\sup_{0\leq t\leq T}\bigg|\int_{B_1}z\widetilde{N}(t,\mathrm{d}z)\bigg|<\varepsilon\bigg)
> \mathbb{P}\bigg(\sup_{0\leq t\leq T}\bigg|\int_{B_1}z\widetilde{N}(t,\mathrm{d}z)+\int_{B_1^c}z N(t,\mathrm{d}z)\bigg|<\varepsilon\bigg).$$
Thus we just need to prove the result for the non-truncated case. This can be done by scaling and the related result of  Proposition 3 \cite{Jean Bertoin}.
\end{proof}

\begin{Lemma}\label{attractor}{\bf(Position of random equilibrium)}
In the cases of truncated and non-truncated with $\alpha\in(1,2)$, the random equilibrium $\{a_\beta(\omega)\}_{\omega\in\Omega}$ of system \eqref{eq0} can be stay in the vicinity of the origin with positive probability. That is, for any $\beta\in\mathbb{R}$, $\varepsilon>0$ and $T\geq0$, there exists a measurable set $\mathcal{A}\in\mathcal{F}_{-\infty}^T$ of positive measure such that
$$a_\beta(\theta_s\omega)\in(-\varepsilon,\varepsilon)\quad\mbox{for all }s\in[0,T]\quad\mbox{and }\omega\in \mathcal{A}.$$
\end{Lemma}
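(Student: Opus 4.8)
The plan is to realise $\mathcal{A}$ as the intersection of two independent events of positive probability: a ``past'' event $\{\,|a_\beta(\omega)|<\delta\,\}\in\mathcal{F}_{-\infty}^{0}$, saying that the random equilibrium sits very close to the origin at time $0$, and a ``future'' event $\{\,\sup_{0\le t\le T}|L_t|<\eta/|\sigma|\,\}$, built from the increments of $L$ on $[0,T]$, saying that the driver stays uniformly small there. Since the singleton attractor is strictly invariant, $\varphi(s,\omega,a_\beta(\omega))=a_\beta(\theta_s\omega)$ for every $s$, so it is enough to know that a trajectory of \eqref{eq0} started within $\delta$ of $0$ and driven by a path of sup-norm below $\eta$ cannot leave $(-\varepsilon,\varepsilon)$ before time $T$. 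Because $(-\varepsilon',\varepsilon')\subset(-\varepsilon,\varepsilon)$ for $\varepsilon'<\varepsilon$, we may take $\varepsilon$ as small as convenient relative to $\beta$ and $T$; we also assume $\sigma\neq0$, the degenerate case being immediate.

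\emph{The past event.} The singleton random attractor arises as a pullback limit, $a_\beta(\omega)=\lim_{t\to\infty}\varphi(t,\theta_{-t}\omega,0)$, hence $a_\beta$ is $\mathcal{F}_{-\infty}^{0}$-measurable. By Proposition \ref{Cllapse of random attractor} the unique $\varphi$-invariant measure is $\mu_\omega=\delta_{a_\beta(\omega)}$, so through the correspondence $\rho=\mathbb{E}\mu$ recalled in Remark \ref{relation invariant measure} the law of $a_\beta(\omega)$ under $\mathbb{P}$ is exactly the stationary measure $\rho(\mathrm{d}x)=p(x)\,\mathrm{d}x$. Lemma \ref{levyproperty1} gives $p>0$ on all of $\mathbb{R}$, so $\mathbb{P}(|a_\beta(\omega)|<\delta)=\int_{-\delta}^{\delta}p(x)\,\mathrm{d}x>0$ for every $\delta>0$.

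\emph{The pathwise estimate (the main point).} Let $X$ be the strong solution of $\mathrm{d}X_t=b(X_t)\,\mathrm{d}t+\sigma\,\mathrm{d}\ell_t$, $X_0=x_0$, where $\ell$ is c\`adl\`ag on $[0,T]$ with $\ell_0=0$ and $b(x)=\beta x-x^3$; then $z_t:=X_t-\sigma\ell_t=x_0+\int_0^t b(X_s)\,\mathrm{d}s$ is absolutely continuous with $\dot z_t=b(z_t+\sigma\ell_t)$ a.e. Assuming $|x_0|<\delta$ and $\sup_{0\le t\le T}|\sigma\ell_t|<\eta$, set $\tau:=\inf\{t\ge0:|z_t|\ge\varepsilon-\eta\}$; on $[0,\tau\wedge T]$ one has $|X_t|\le|z_t|+\eta<\varepsilon$, so $\tfrac{d}{dt}|z_t|\le|b(X_t)|\le|\beta|\,|z_t|+(|\beta|\eta+\varepsilon^3)$, and Gr\"onwall's inequality gives $|z_t|\le(\delta+\eta+\varepsilon^3/|\beta|)\,e^{|\beta|t}$ there. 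Choosing $\varepsilon$ so small that $\varepsilon^3 e^{|\beta|T}/|\beta|<\varepsilon/4$, and then $\delta,\eta<\tfrac{1}{4}\varepsilon e^{-|\beta|T}$ with $\eta<\varepsilon/4$, forces $|z_t|<3\varepsilon/4<\varepsilon-\eta$ on $[0,\tau\wedge T]$, hence $\tau>T$ and $\sup_{0\le t\le T}|X_t|<\varepsilon$ (for $\beta=0$ this is immediate from $|\dot z_t|\le\varepsilon^3$). This is the step that absorbs the instability of $0$ when $\beta>0$: a small driver alone does not suffice, and one must in addition force the starting value --- i.e. the value of the random equilibrium at time $0$ --- to be exponentially small in $T$, which is exactly why the support statement of Lemma \ref{levyproperty1} is needed.

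\emph{Assembly.} With $\varepsilon,T$ fixed and the corresponding $\delta,\eta>0$ as above, put $\mathcal{A}:=\{\,|a_\beta(\omega)|<\delta\,\}\cap\{\,\sup_{0\le t\le T}|L_t|<\eta/|\sigma|\,\}$. The first set is in $\mathcal{F}_{-\infty}^{0}$, the second in $\sigma(L_t-L_s:0\le s\le t\le T)$, and these $\sigma$-algebras are independent because $L$ has independent increments; thus $\mathbb{P}(\mathcal{A})$ equals the product of the two probabilities, which is positive by the two previous paragraphs together with Lemma \ref{levyproperty2} applied with $g\equiv0$. Moreover $\mathcal{A}\in\mathcal{F}_{-\infty}^{T}$. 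On $\mathcal{A}$ the pathwise estimate, used with $\ell_t=L_t(\omega)$ and $x_0=a_\beta(\omega)$, together with $\varphi(s,\omega,a_\beta(\omega))=a_\beta(\theta_s\omega)$, yields $a_\beta(\theta_s\omega)\in(-\varepsilon,\varepsilon)$ for all $s\in[0,T]$, as required. I expect the pathwise estimate, and the accompanying bookkeeping of how $\delta$ must shrink with $T$ and $\beta$, to be the only genuinely delicate part; the probabilistic inputs are then routine.
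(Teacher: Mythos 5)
Your proposal is correct and follows essentially the same route as the paper: a past event $\{|a_\beta(\omega)|<\delta\}\in\mathcal{F}_{-\infty}^0$ of positive probability via the full support of the stationary density (Lemma \ref{levyproperty1}) and the identity $\rho=\mathbb{E}\mu$, intersected with an independent future event of small noise supremum from Lemma \ref{levyproperty2}, followed by a Gronwall estimate on $a_\beta(\theta_t\omega)-\sigma L_t$ along the random equilibrium. The only difference is cosmetic: your stopping-time argument treats the cubic term explicitly (at the price of also taking the noise bound exponentially small in $T$), whereas the paper absorbs that step by citing Proposition 4.1 of \cite{Mark Callaway} and only needs the initial position exponentially small; both choices of constants are admissible since it suffices to prove the claim for small $\varepsilon$.
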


\begin{proof}
\begin{eqnarray}\label{lm2.8.1}
\rho(\cdot)=\int_\Omega\delta_{a(\omega)}(\cdot)\mathrm{d}\mathbb{P}(\omega)
\end{eqnarray}
Define 
$$\eta:=\frac{\varepsilon e^{-|\beta| T}}{3}.$$
The support of $\rho$ is the entire real line by Lemma \ref{levyproperty1}, thus combine with \eqref{lm2.8.1} the set 
\begin{eqnarray}
A_1:=\{\omega\in\Omega:a_\beta(\omega)\in(-\eta,\eta)\}
\end{eqnarray}
has positive probability for any $\beta\in\mathbb{R}$ and $A_1\in\mathcal{F}_{-\infty}^0$. Define 
$$A_2:=\{\omega\in\Omega:\sup_{t\in[0,T]}|L_t|\leq \eta e^{|\beta| T}\}\in\mathcal{F}_0^T.$$
Then $A_2$ has positive probability based on Lemma \ref{levyproperty2}. 
Set $\mathcal{A}:=A_1\cap A_2$. Then $\mathcal{A}\in\mathcal{F}_{-\infty}^T$ still has positive probability due to the independent of $A_1$ and $A_2$. Since $a_\beta(\omega)$ is a random equilibrium of $\varphi$, it follows that 
\begin{eqnarray}
a_\beta(\theta_t\omega)=a_\beta(\theta_s\omega)+\int_s^t(\beta a_\beta(\theta_r\omega)-a_\beta^3(\theta_r\omega))\mathrm{d}r+\sigma(L_t(\omega)-L_s(\omega)).
\end{eqnarray}
Let $\gamma(\theta_t\omega)=a_\beta(\theta_t\omega)-\sigma L_t(\omega)$. Then $\gamma(\theta_t\omega)$ is continuous. Refer to the proof of Proposition 4.1 \cite{Mark Callaway}, for $\omega\in\mathcal{A}$,
$$|\gamma(\theta_t\omega)|\leq \eta+\int_0^t|\beta||\gamma(\theta_s\omega)|\mathrm{d}s \quad\mbox{for all }t\in[0,T].$$
By Gronwall's inequality, it follows that 
$$|\gamma(\theta_t\omega)|\leq \eta e^{|\beta|t} \quad \mbox{for all }t\in[0,T].$$
Thus
$$|a_\beta(\theta_t\omega)|\leq \eta e^{|\beta|t}+\eta e^{|\beta| T}<\varepsilon \quad \mbox{for all }t\in[0,T].$$
\end{proof}
\section{Proof of the main results}\label{Proof of the main results}
{\bf Proof of Theorem \ref{Lya-expo}.} 
\begin{proof}
Lemma \ref{moment} assures that 
\begin{eqnarray*}
\int_{\mathbb{R}}|b'(x)|\rho(\mathrm{d}x)
=\int_{\mathbb{R}}|\beta-3x^2|\rho(\mathrm{d}x)
\leq|\beta|+3\int_{\mathbb{R}}x^2\rho(\mathrm{d}x)
\leq|\beta|+3c.
\end{eqnarray*}
By Remark \ref{Lyapunov}, the linear cocycle $\Phi$ satisfies the integrability conditions. According to the multiplicative ergodic theorem the Lyapunov exponent associated to the invariant measure $\mu=\delta_{a(\omega)}$ exists as a limit. Further it can be computed according to Birkhoff-Chintchin ergodic theorem \cite{Ludwig Arnold}, that is
\begin{eqnarray*}
\lambda&=&\lim_{t\to\infty}\frac{1}{t}\ln|\xi_t|
=\lim_{t\to\infty}\frac{1}{t}\int_0^t b'(a(\theta_{s}\omega))\mathrm{d}s
=\int_{\mathbb{R}}b'(x)\rho(\mathrm{d}x)
=\int_{\mathbb{R}}b'(x)p(x)\mathrm{d}x,
\end{eqnarray*}
where $\xi_t$ is the linearized flow along the invariant measure $\mu=\delta_{a(\omega)}$,
\begin{eqnarray}\label{linearization}
\mathrm{d}\xi_{t} = b'(a(\theta_{t}\omega))\xi_t\mathrm{d}t=[\beta-3a^2(\theta_{t}\omega)]\xi_t,
\end{eqnarray}
and $p(x)$ is the stationary density of the corresponding  Fokker-Planck equation. Note that the stationary density $p(x)$ satisfies 
\begin{eqnarray}\label{Lya-expo-eq2}
\int_{\mathbb{R}} bf^{'}p\mathrm{d}x-\int_{\mathbb{R}}(-\bigtriangleup)^{\frac{\alpha}{2}}fp\mathrm{d}x=0.
\end{eqnarray}
Let $f=\ln p$ based on the strictly positive property of $p$ in Lemma \ref{levyproperty1}.  Then \eqref{Lya-expo-eq2} becomes to 
\begin{eqnarray}\label{Lya-expo-eq3}
\int_{\mathbb{R}} bp^{'}\mathrm{d}x-\int_{\mathbb{R}}(-\bigtriangleup)^{\frac{\alpha}{2}}\ln p \cdot p\mathrm{d}x=0.
\end{eqnarray}
By the stationary Fokker-Planck equation, we can also get 
\begin{eqnarray}\label{Lya-expo-eq4}
\int_{\mathbb{R}} bp^{'}\mathrm{d}x+\int_{\mathbb{R}}(-\bigtriangleup)^{\frac{\alpha}{2}}p\mathrm{d}x+\int_{\mathbb{R}} b^{'}p\mathrm{d}x=0.
\end{eqnarray}
By \eqref{Lya-expo-eq3} and \eqref{Lya-expo-eq4}, we can determine the sign of the Lyapunov exponent,
\begin{eqnarray*}
\lambda&=&\int_{\mathbb{R}}b'p\mathrm{d}x\\
&=&-\int_{\mathbb{R}}(-\bigtriangleup)^{\frac{\alpha}{2}}p\mathrm{d}x-\int_{\mathbb{R}}(-\bigtriangleup)^{\frac{\alpha}{2}}\ln p \cdot p\mathrm{d}x\\
&=&-\int_{\mathbb{R}}(-\bigtriangleup)^{\frac{\alpha}{4}}\ln p \cdot (-\bigtriangleup)^{\frac{\alpha}{4}} p\mathrm{d}x\\
&<&0.
\end{eqnarray*} 
The last step holds by noticing the monotonicity of $\ln u$, and the definition of $(-\bigtriangleup)^{\frac{\alpha}{2}}$.
\end{proof}

{\bf Proof of Theorem \ref{uniformlyattractive}.}
\begin{proof}
(i) Take $x\in\R$ such that $x\neq a_{\beta}(\omega)$. Assume $x> a_{\beta}(\omega)$ without loss of generality. Then $\varphi(t,\omega,x)>\varphi(t,\omega,a_{\beta}(\omega))$ for all $t\geq 0$, according to the proof of Proposition \ref{Cllapse of random attractor}. Let $\gamma(t,\omega,x)=\varphi(t,\omega,x)-\sigma L_t$. Then $\gamma(t,\omega,x)$ is continuous, $\gamma(t,\omega,x)>\gamma(t,\omega,a_{\beta}(\omega))$ and 
\begin{eqnarray*}
\gamma(t,\omega,x)-\gamma(t,\omega,a_{\alpha}(\omega))
&=&\varphi(t,\omega,x)-\varphi(t,\omega,a_{\beta}(\omega))\\
&=&x-a_{\beta}(\omega)+\int_0^t[\beta\varphi(s,\omega,x)-(\varphi(s,\omega,x))^3]\mathrm{d}s\\
&&-\int_0^t[\beta\varphi(s,\omega,a_{\beta}(\omega))-(\varphi(s,\omega,a_{\beta}(\omega)))^3]\mathrm{d}s\\
&\leq& x-a_{\beta}(\omega)+\beta\int_0^t[\varphi(s,\omega,x)-\varphi(s,\omega,a_{\beta}(\omega))]\mathrm{d}s\\
&=&x-a_{\beta}(\omega)+\beta\int_0^t[\gamma(s,\omega,x)-\gamma(s,\omega,a_{\beta}(\omega))]\mathrm{d}s.
\end{eqnarray*}
In line with Lemma \ref{Gronwall}, 
$$|\gamma(t,\omega,x)-\gamma(t,\omega,a_{\beta}(\omega))|\leq e^{\beta t}|x-a_{\beta}(\omega)| \quad\mbox{for all }x\in\R.$$ 
(ii) The proof idea refer to Theorem $4.2$ \cite{Mark Callaway}.
Suppose to the contrary that there exists $\delta>0$ such that
$$\lim_{t\to0}\sup_{x\in(-\delta,\delta)}\esssup_{\omega\in\Omega}|\varphi(t,\omega,a_{\beta}(\omega)+x)-a_{\beta}(\theta_t\omega)|=0,$$
which implies that there exists $N\in\mathbb{N}$ such that
$$\sup_{x\in(-\delta,\delta)}\esssup_{\omega\in\Omega}|\varphi(t,\omega,a_{\beta}(\omega)+x)-a_{\beta}(\theta_t\omega)|<\frac{\sqrt{\beta}}{4}, \mbox{ for all }t\geq N.$$
For the above $\delta>0$,  by Lemma \ref{attractor} there exists $\mathcal{A}\in\mathcal{F}_{-\infty}^{0}$ of positive probability such that 
$$a_{\beta}(\omega)\in(-\frac{\delta}{2},\frac{\delta}{2}), \mbox{ for } \omega\in\mathcal{A}.$$
Note that $-\sqrt{\beta}$ and $\sqrt{\beta}$ are two attractive equilibria for the corresponding deterministic system with $\sigma=0$ in system \eqref{eq0}. Let $\phi(t,x_0)$ is the flow of the deterministic system started from $x_0$. Then there exists $T>N$ such that 
$$\phi(T,\frac{\delta}{2})>\frac{\sqrt{\beta}}{2} \quad\mbox{and }\quad \phi(T,-\frac{\delta}{2})<-\frac{\sqrt{\beta}}{2}.$$
For any $\varepsilon>0$, we define
$$\mathcal{A}_\varepsilon^+=\{\omega\in\Omega: \sup_{t\in[0,T]}|L_t|<\varepsilon\}.$$
Then $\mathcal{A}_\varepsilon^+\in\mathcal{F}_0^T$ and $\mathcal{A}_\varepsilon^+$ has positive probability by Lemma \ref{levyproperty2}. Thus $\mathbb{P}(\mathcal{A}\cap\mathcal{A}_\varepsilon^+)=\mathbb{P}(\mathcal{A})\mathbb{P}(\mathcal{A}_\varepsilon^+)$ is positive. According to Lemma \ref{disturbesde}, there exist $\varepsilon>0$ such that on the set $\mathcal{A}_\varepsilon^+$, we have
$$|\varphi(T,\omega,\frac{\delta}{2})-\phi(T,\frac{\delta}{2})|<\frac{\sqrt{\beta}}{4},\quad \mbox{ and } |\varphi(T,\omega,-\frac{\delta}{2})-\phi(T,-\frac{\delta}{2})|<\frac{\sqrt{\beta}}{4}.$$ 
While on the set $\mathcal{A}\cap\mathcal{A}_\varepsilon^+$, 
$$\sup_{x\in(-\delta,\delta)}|\varphi(T,\omega,a_\beta(\omega)+x)-a_\beta(\theta_T\omega)|\geq \max\bigg\{|\varphi(T,\omega,\frac{\delta}{2})-a_\beta(\theta_T\omega)|, \,\,|\varphi(T,\omega,-\frac{\delta}{2})-a_\beta(\theta_T\omega)|\bigg\}.$$
Consequently,
$$\sup_{x\in(-\delta,\delta)} \esssup_{\omega\in\Omega}|\varphi(T,\omega,a_\beta(\omega)+x)-a_\beta(\theta_T\omega)|>\frac{\sqrt{\beta}}{4},$$
which contradicts the assumption. The result follows.
\end{proof}

{\bf Proof of Theorem \ref{Fini-tim Lya exponent}.}
\begin{proof}
(i) follows directly from Theorem \ref{uniformlyattractive} (i).\\
(ii)We recall that $\Phi_\beta(t,\omega):=\frac{\partial\varphi_\beta}{\partial x}(t,\omega,a_\beta(\omega))$ is the linearized random dynamical system along the random equilibrium $a_\beta(\omega)$. The linearized equation along the random equilibrium $a_\beta(\omega)$ is given by
\begin{eqnarray}\label{linearized}
\dot{\xi_t}=(\beta-3a_\beta(\theta_t\omega)^2)\xi_t.
\end{eqnarray}
Thus 
\begin{eqnarray}\label{linearizedflow}
\Phi_\beta(t,\omega)=\exp\bigg(\int_0^t(\beta-3a_\beta(\theta_s\omega)^2))\mathrm{d}s\bigg).
\end{eqnarray}
The finite-time Lyapunov exponent 
$$\lambda_\beta^{T,\omega}=\beta-\frac{1}{T}\int_0^t 3a_\beta(\theta_s\omega)^2\mathrm{d}s$$
Let $\varepsilon:=\frac{\sqrt{\beta}}{2}$. Then based on Proposition \ref{attractor} there exists a measurable set $\mathcal{A}\in\mathcal{F}_{-\infty}^T$ such that, 
$$\mathbb{P}\bigg(a_\beta(\theta_s\omega)^2\in(-\varepsilon,\varepsilon)\mbox{ for all }s\in[0,T]\bigg)>0.$$
The result follows since
$$\lambda_\beta^{T,\omega}\geq\frac{\beta}{4}, \mbox{ for }\,\omega\in\mathcal{A}.$$
\end{proof}

{\bf Proof of Theorem \ref{Dichotomy spectrum} on dichotomy spectrum} 
\begin{proof}
The explicit expression of the linearized flow is given by \eqref{linearizedflow} as follows
\begin{eqnarray*}
\Phi_\beta(t,\omega)=\exp\bigg(\int_0^t(\beta-3a_\beta(\theta_s\omega)^2))\mathrm{d}s\bigg).
\end{eqnarray*}
Hence, 
$$|\Phi_\beta(t,\omega)|\leq e^{\beta|t|} \quad\mbox{for all }t\in\mathbb{R},$$
and $\Sigma_\beta\subset[-\infty,\beta]$. In the following, we prove that $[-\infty,\beta]\subset \Sigma_\beta$. Otherwise, $\Phi_\beta$ admits an exponential dichotomy with growth rate for some $\gamma\in(-\infty,\beta]$ with invariant projector $P_\gamma$ and positive constants $K,\varepsilon$. Since  the system \eqref{eq0} is considered in $\mathbb{R}$, the invariant projector $P_\gamma$ can be two possible cases: (i) $P_\gamma=$id and (ii) $P_\gamma=0$.

{\bf Case (i).} $P_\gamma=$id. According to Proposition \ref{attractor}, for that $\varepsilon>0$ in the exponential dichotomy, there exists a measurable set $\mathcal{A}\in\mathcal{F}_{-\infty}^{T}$ of positive measure such that
$$a_\beta(\theta_s\omega)\in(-\frac{\sqrt{\varepsilon}}{2},\frac{\sqrt{\varepsilon}}{2})\quad\mbox{for all }\omega\in\mathcal{A}\quad\mbox{and }s\in[0,T].$$
We can get a contradiction from \eqref{linearization} with the exponential dichotomy for $\gamma$ \cite{Mark Callaway}.

{\bf Case (ii).} $P_\gamma=0$. On the basis of flow property and definition for exponential dichotomy, we have for almost all $\omega\in\Omega$
$$\Phi_\beta(t,\theta_{-t}\omega)^{-1}=\Phi_\beta(-t,\omega)\leq Ke^{(\gamma+\varepsilon)(-t)}\quad\mbox{for all }t\geq0.$$
Namely, almost surely, 
$$\Phi_\beta(t,\theta_{-t}\omega)\geq \frac{1}{K}e^{(\gamma+\varepsilon)t}\quad\mbox{for all }t\geq0.$$
Define 
$$\mathcal{A_\varepsilon^-}=\{|a_{\beta}(\omega)|\in(-\varepsilon,-\varepsilon)\},$$ 
and 
$$\mathcal{A_\varepsilon^+}=\{\sup_{t\in[0,T]}|L_t-\frac{t^4}{4}+\beta\frac{t^2}{2}-t|\leq\varepsilon\}.$$
Then for any $\varepsilon>0$, $\mathcal{A_\varepsilon^-}\in\mathcal{F}_{-\infty}^0$ and $\mathcal{A_\varepsilon^+}\in\mathcal{F}_0^T$ are independent positive probability sets assured by Lemma \ref{attractor} and Lemma \ref{levyproperty2}. Consider 
$$x_t=\int_0^t(\beta x_s-x_s^3)\mathrm{d}s+\frac{t^4}{4}-\beta\frac{t^2}{2}+t.$$
According to Lemma \ref{disturbesde}, there exists $\varepsilon>0$, such that for all $\omega\in\mathcal{A_\varepsilon^-}\cap\mathcal{A_\varepsilon^+}$, 
$$\sup_{t\in[0,T]}|a_{\beta}(\theta_t(\omega))-t|\leq1.$$
We will arrive at a contradiction \cite{Mark Callaway} with 
$$\frac{\ln K + (\beta-\gamma)T}{3}<\int_0^T a_\beta(\theta_s\omega)\mathrm{d}s\leq\frac{\ln K + (\beta-\gamma)T}{3}$$
for $\omega\in\theta_T(\mathcal{A_\varepsilon^-}\cap\mathcal{A_\varepsilon^+})$ and some $T>0$ sufficiently large, where $\mathcal{A_\varepsilon^-}\cap\mathcal{A_\varepsilon^+}\subset\Omega$ is a positive probability set.
In conclusion, $$\Sigma_\beta=[-\infty,\beta] \quad \mbox{for all }\beta\in\mathbb{R}.$$
\end{proof}

\appendix
\section{Appendix}
\begin{Lemma}{\bf(Generalized Gronwall inequality)}\label{Gronwall}
Let $t\to f(t,\omega)$ be a continuous function almost surely in $\mathbb{R}$ satisfing
$$f(t)\leq f(s)+\beta\int_s^tf(u)\mathrm{d}u, \quad a.s. \quad 0\leq s<t<\infty,$$
where $\beta<0$. Then
$$f(t)\leq f(0)e^{\beta t}\quad a.s. \quad 0\leq t<\infty.$$
\end{Lemma}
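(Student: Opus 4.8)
The plan is to recast the integral inequality as the statement that the auxiliary function
$\phi(t):=f(t)-\beta\int_0^t f(u)\,\mathrm{d}u$ is non-increasing, and then to recover the exponential bound by an integrating-factor computation, tracking signs carefully since multiplication by $\beta<0$ reverses inequalities.

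First I would fix a realization $\omega$ in a full-measure event on which $t\mapsto f(t,\omega)$ is continuous and the inequality holds for all $0\le s\le t$ simultaneously (by continuity this reduces, if needed, to checking it on rational pairs, so the event has probability one), and argue pathwise, suppressing $\omega$. Put $H(t):=\int_0^t f(u)\,\mathrm{d}u$, so $H\in C^1$ with $H'=f$ and $H(0)=0$, and set $\phi:=f-\beta H$, which is continuous. The hypothesis reads $f(t)-f(s)\le\beta\big(H(t)-H(s)\big)$, which rearranges precisely to $\phi(t)\le\phi(s)$ for all $0\le s\le t$; hence $\phi$ is non-increasing with $\phi(0)=f(0)$.

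Next, from $H'-\beta H=f-\beta H=\phi$ and $H(0)=0$, the integrating factor $e^{-\beta t}$ gives $\big(e^{-\beta t}H\big)'=e^{-\beta t}\phi$, so $H(t)=e^{\beta t}\int_0^t e^{-\beta u}\phi(u)\,\mathrm{d}u$ and therefore $f(t)=\phi(t)+\beta e^{\beta t}\int_0^t e^{-\beta u}\phi(u)\,\mathrm{d}u$. Since $\phi$ is non-increasing, $\phi(u)\ge\phi(t)$ for $u\in[0,t]$, and $e^{-\beta u}>0$, whence $\int_0^t e^{-\beta u}\phi(u)\,\mathrm{d}u\ge\phi(t)\int_0^t e^{-\beta u}\,\mathrm{d}u=\phi(t)\,\frac{e^{-\beta t}-1}{-\beta}$. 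Multiplying by $\beta e^{\beta t}<0$ reverses this inequality; since $\beta e^{\beta t}\cdot\frac{e^{-\beta t}-1}{-\beta}=e^{\beta t}-1$, we get $\beta e^{\beta t}\int_0^t e^{-\beta u}\phi(u)\,\mathrm{d}u\le\phi(t)\big(e^{\beta t}-1\big)$. Substituting into the identity for $f(t)$ yields $f(t)\le\phi(t)+\phi(t)\big(e^{\beta t}-1\big)=\phi(t)e^{\beta t}\le\phi(0)e^{\beta t}=f(0)e^{\beta t}$, which is the claim.

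The main obstacle is exactly the sign of $\beta$: the Picard iteration that proves the usual Gronwall inequality multiplies the running bound by $\beta$ and thereby reverses the inequality, so it is unavailable here; likewise the integrating-factor estimate must be fed a lower bound for $\int_0^t e^{-\beta u}\phi(u)\,\mathrm{d}u$ rather than an upper bound, and that lower bound is precisely what the monotonicity of $\phi$ supplies. Once $\phi$ is identified as the correct monotone quantity the remaining steps are routine; the only other points needing a sentence are the algebraic rearrangement turning the hypothesis into the monotonicity of $\phi$ and the measure-theoretic reduction to a pathwise argument.
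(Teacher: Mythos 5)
Your proof is correct, and it is self-contained in a way the paper's is not. The paper proceeds by subtracting the candidate solution: it sets $f_0(t)=f(0)e^{\beta t}$, notes $f_0(t)-f_0(s)=\beta\int_s^t f_0(u)\,\mathrm{d}u$, and observes that $f_1:=f-f_0$ satisfies the same integral inequality with $f_1(0)=0$; the remaining claim, that such an $f_1$ must be non-positive, is not proved but delegated to Lemma 8.1 of It\^o--Nisio. You instead identify the monotone quantity $\phi(t)=f(t)-\beta\int_0^t f(u)\,\mathrm{d}u$, observe that the hypothesis is exactly the statement that $\phi$ is non-increasing, and then recover $f$ from $\phi$ by an explicit integrating-factor computation, with the sign of $\beta$ handled correctly at the one place it reverses an inequality (I checked the algebra: $\beta e^{\beta t}\cdot\frac{e^{-\beta t}-1}{-\beta}=e^{\beta t}-1$, and the final step $\phi(t)e^{\beta t}\le\phi(0)e^{\beta t}$ uses only $\phi(t)\le\phi(0)$ and $e^{\beta t}>0$). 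What your route buys is a complete elementary argument with no external citation, plus an explicit treatment of the measure-theoretic point (a single full-measure event on which the inequality holds for all pairs, obtained from rational pairs by continuity) that the paper passes over; what the paper's route buys is brevity, at the cost of resting the whole conclusion on a lemma quoted from elsewhere. Either argument is acceptable; yours could be substituted for the paper's without loss.
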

\begin{proof}
Let $f_0(t)=f(0)e^{\beta t}$ and $f_1(t)=f(t)-f_0(t)$. Then $f_1(0)=0$, function $f_1(t)$ is continuous almost surely and $f'_0(t)=\beta f_0(t)$. Hence
$$f_0(t)-f_0(s)=\beta\int_s^t f_0(u)\mathrm{d}u.$$
We obtain
\begin{eqnarray*}
f_1(t)-f_1(s)&=&(f(t)-f_0(t))-(f(s)-f_0(s))\\
&\leq&\beta\int_s^t f(u)\mathrm{d}u-\beta\int_s^t f_0(u)\mathrm{d}u\\
&=&\beta\int_s^t f_1(u)\mathrm{d}u.
\end{eqnarray*}
That is $f_1(t)-f_1(s)\leq \beta\int_s^t f_1(u)\mathrm{d}u$ and $f_1(0)=0$. The result follows based on the proof of Lemma $8.1$ \cite{Kiyosi Ito and Makiko Nisio}.
\end{proof}
\begin{Lemma}{\bf (comparison principle)}\label{disturbesde}
In the cases of truncated and non-truncated, system \eqref{eq0} and the deterministic system 
$$\dot{x}_t=\beta x_t-x_t^3+\sigma g(t),$$
with continuous function $g(t)$.
For any $T>0$ and $\delta>0$, there exists $\varepsilon>0$ and a positive probability set $\mathcal{A}^{\varepsilon}\in\mathcal{F}_0^T$, such that for $\omega\in\mathcal{A}$ and $|X_0-x_0|<\varepsilon$,
$$\sup_{t\in[0,T]}|X_t-x_t|\leq \delta.$$
\end{Lemma}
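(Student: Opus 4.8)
The plan is to strip off the jumps by a Doss--Sussmann-type substitution, reducing the claim to continuous dependence of ODE solutions on their forcing term, and to produce the event $\mathcal{A}^\varepsilon$ from Lemma \ref{levyproperty2}. Write $G_t:=\int_0^t g(s)\,\mathrm{d}s$, which is $C^1$ with $G_0=0$, and set $Y_t:=X_t-\sigma L_t$, $z_t:=x_t-\sigma G_t$. Since the jumps of $X_t$ coincide with $\sigma$ times those of $L_t$, the path $t\mapsto Y_t$ is absolutely continuous and solves $\dot Y_t=\beta X_t-X_t^3$ with $Y_0=X_0$, while $\dot z_t=\beta x_t-x_t^3$ with $z_0=x_0$. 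The candidate event is
$$\mathcal{A}^\varepsilon:=\Big\{\omega:\ \sup_{0\le t\le T}\big|\sigma L_t-\sigma G_t\big|<\varepsilon'\Big\},$$
for a threshold $\varepsilon'=\varepsilon'(T,\delta)>0$ to be fixed below; it lies in $\mathcal{F}_0^T$ because it depends only on $\{L_t:0\le t\le T\}$.

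Work on $\mathcal{A}^\varepsilon$ with $\varepsilon'\le1$ and with $|X_0-x_0|<\varepsilon\le1$. The first, and essential, step is a uniform a priori bound. The drift $b(x)=\beta x-x^3$ is strongly dissipative: for $|x|\ge2|c|$ one has $x\,b(x+c)=\beta x^2+\beta xc-x(x+c)^3\le\beta x^2+|\beta|\,|c|\,|x|-\tfrac18|x|^4$, which is negative once $|x|$ exceeds a threshold $\tau(\beta,|c|)$. Applying this with $c=\sigma L_t$ (bounded by $|\sigma|\|G\|_{\infty,[0,T]}+1$ on $\mathcal{A}^\varepsilon$) and with $c=\sigma G_t$ respectively, an energy/barrier argument gives $\sup_{[0,T]}|Y_t|,\sup_{[0,T]}|z_t|\le M$ for a constant $M=M(\beta,\sigma,\|G\|_{\infty,[0,T]},|x_0|)$ uniform over $\mathcal{A}^\varepsilon$, hence $\sup_{[0,T]}|X_t|,\sup_{[0,T]}|x_t|\le M':=M+|\sigma|\|G\|_{\infty,[0,T]}+1$. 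Now set $w_t:=Y_t-z_t$ and $r_t:=\sigma(L_t-G_t)$, so that $X_t-x_t=w_t+r_t$ and
$$\dot w_t=\beta(X_t-x_t)-(X_t^3-x_t^3)=(\beta-q_t)w_t+(\beta-q_t)r_t,\qquad q_t:=X_t^2+X_tx_t+x_t^2\ge0.$$
Because $q_t\ge0$, variation of constants gives $\int_s^t(\beta-q_u)\,\mathrm{d}u\le|\beta|T$ and therefore
$$\sup_{0\le t\le T}|w_t|\le e^{|\beta|T}\Big(|X_0-x_0|+T\big(|\beta|+3(M')^2\big)\sup_{0\le t\le T}|r_t|\Big)\le e^{|\beta|T}\Big(\varepsilon+T\big(|\beta|+3(M')^2\big)\varepsilon'\Big).$$
Choosing $\varepsilon$ and $\varepsilon'$ small enough that this bound plus $\varepsilon'$ is at most $\delta$ yields $\sup_{[0,T]}|X_t-x_t|\le\sup_{[0,T]}|w_t|+\sup_{[0,T]}|r_t|\le\delta$ on $\mathcal{A}^\varepsilon$.

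Finally, $\widetilde L_t:=L_t-G_t$ is of the form \eqref{levy-drift} (the drift $-G_t$ is continuous and vanishes at $0$), so Lemma \ref{levyproperty2} gives $\mathbb{P}\big(\sup_{0\le t\le T}|\widetilde L_t|<\varepsilon'/|\sigma|\big)>0$, i.e. $\mathbb{P}(\mathcal{A}^\varepsilon)>0$; this holds verbatim in the truncated and the non-truncated cases since Lemma \ref{levyproperty2} covers both. The one genuinely non-routine point is the a priori bound: the cubic nonlinearity is only locally Lipschitz, so a naive Gronwall estimate is unavailable, and one must first use the dissipativity $-x^3$ of the drift to confine $Y_t$ and $z_t$ to a fixed ball on $[0,T]$ uniformly over $\mathcal{A}^\varepsilon$ — this is also what dictates carrying $\|G\|_{\infty,[0,T]}$ through the constants and restricting to $\varepsilon'\le1$. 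Once that is in place, the remaining equation for $w_t$ is linear with a nonpositive damping coefficient $-q_t$, so the conclusion is a plain Gronwall estimate combined with the small-ball statement of Lemma \ref{levyproperty2}.
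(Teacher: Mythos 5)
Your proof is correct, and it is noticeably more careful than the paper's own argument, which consists of a single Gronwall estimate
$|X_t-x_t|\le|X_0-x_0|+\int_0^t|\beta|\,|X_s-x_s|\,\mathrm{d}s+\sigma\sup_{s\le T}|L_s-g(s)|$
in which the cubic difference $X_s^3-x_s^3$ is silently discarded. That step is legitimate only because the cubic term is monotone (one compares $v_t=(X_t-x_t)-\sigma(L_t-G_t)$ and observes that wherever $|v_t|$ exceeds $\sup|r|$ the signs of $v_t$ and $X_t-x_t$ agree, so $\tfrac{d}{dt}|v_t|=(\beta-q_t)|X_t-x_t|\le\beta|X_t-x_t|$ with $q_t=X_t^2+X_tx_t+x_t^2\ge0$); the paper does not spell this out. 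You instead keep the cubic term, first confine $Y_t=X_t-\sigma L_t$ and $z_t$ to a ball of radius $M$ on $[0,T]$ using the dissipativity of $-x^3$ (valid uniformly on $\mathcal{A}^\varepsilon$ because $\sigma L_t$ is there bounded by $|\sigma|\,\|G\|_{\infty,[0,T]}+1$), and then run a variation-of-constants estimate on the linear equation $\dot w_t=(\beta-q_t)w_t+(\beta-q_t)r_t$, using $\Phi(t,s)\le e^{|\beta|T}$ and $|\beta-q_s|\le|\beta|+3(M')^2$. Both routes reduce the probabilistic content to the small-ball statement of Lemma \ref{levyproperty2}, and both events lie in $\mathcal{F}_0^T$; the difference is only in how the nonlinearity is tamed. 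The monotonicity route yields constants independent of the trajectory size, whereas your a priori bound makes the constants depend on $M'$ but closes the argument without any sign bookkeeping; your observation that a naive Gronwall fails for the merely locally Lipschitz drift is exactly the point the paper glosses over. One cosmetic remark: you compare $L_t$ with $G_t=\int_0^tg$, while the paper compares $L_t$ with $g(t)$ itself; your reading is the consistent one given how the lemma is invoked in the proof of Theorem \ref{Dichotomy spectrum}, where the "drift" subtracted from $L_t$ is already the integrated forcing.
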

\begin{proof}
By Lemma \ref{levyproperty2}, the set $\mathcal{A}^{\varepsilon}=\{\underset{0\leq t\leq T}{\sup}|L_t-g(t)|<\varepsilon\}\in\mathcal{F}_0^T$ has positive probability for $\varepsilon=\frac{\delta e^{-|\beta|T}}{1+\sigma}$. Hence for $\omega\in\mathcal{A}^{\varepsilon}$ and $t\in[0,T]$,
\begin{eqnarray*}
|X_t-x_t|&\leq& |X_0-x_0|+\int_0^t |\beta||X_s-x_s|\mathrm{d}s+\sigma|L_t-g(t)|\\
&\leq&(1+\sigma)\varepsilon+\int_0^t |\beta||X_s-x_s|\mathrm{d}s.
\end{eqnarray*}
Gronwall inequality gives that,
\begin{eqnarray*}
|X_t-x_t|&\leq&\delta.
\end{eqnarray*}
\end{proof}


\begin{thebibliography}{99}
\bibliographystyle{plain}

 \bibitem{David Applebaum}
D. Applebaum. 
\newblock L\'evy Processes and Stochastic Calculus.
\newblock  Cambridge University Press, 2009.


 \bibitem{Ludwig Arnold}
L. Arnold.
\newblock Random Dynamical Systems.
\newblock  New York: Springer-Verlag, 1998.

\bibitem{Jean Bertoin}
J. Bertoin.
\newblock L\'evy Processes.
\newblock  Cambridge University Press, 1996.



\bibitem{Claudia Bucur}
C. Bucur and E. Valdinoci.
\newblock Nonlocal Diffusion and Applications.
\newblock  Springer, 2016.

\bibitem{Mark Callaway}
M. Callaway, T. Doan, J. Lamb and M. Rasmussen.
\newblock The dichotomy spectrum for random dynamical systems and pitchfork bifurcations with additive noise.
\newblock  Annales de I'Institut Henri Poinc$\acute{a}$re-Probabilit\'es et Statistiques, 4(53), 1548-1574, 2017.


\bibitem{Sandra Cerrai}
S. Cerrai.
\newblock Averaging principle for systems of Reaction-Diffusion equations with polynomial nonlinearities perturbed by multiplicative noise.
\newblock  SIAM Journal on Mathematical Analysis, 43(6), 2482-2518, 2011.



\bibitem{Hans Crauel and Franco Flandoli}
H. Crauel and F. Flandoli.
\newblock Additive noise destroys a pitchfork bifurcation.
\newblock  Journal of Dynamics and Differential Equations, 10(2), 259-274, 1998.

\bibitem{Hans Crauel Markov}
H. Crauel.
\newblock Markov measures for random dynamical systems.
\newblock  Stochastics and Stochastic Reports, 37, 153-173, 1991.

\bibitem{JinqiaoDuan}
J. Duan.
\newblock  An Introduction to Stochastic Dynamics.
\newblock Cambridge University Press, 2015.

\bibitem{Martin Friesen Peng Jin Jonas Kremer and Barbara Rudiger}
M. Friesen, P. Jin, J. Kremer and B. R$\ddot{\mbox{u}}$diger.
\newblock Exponential ergodicity for stochastic equations of nonnegative processes with jumps.
\newblock  arXiv preprint arXiv: 1902.02833, 2019.

\bibitem{Michael Rockner}
B. Gess, W. Liu and M. R$\ddot{\mbox{o}}$ckner.
\newblock Random attractors for a class of stochastic partial differential equations driven by general additive noise.
\newblock  Journal of Differential Equations, 251(2011), 1225-1253, 2011.

\bibitem{Martin Hairer and Jonathan C. Mattingly}
M. Hairer and J. C. Mattingly.
\newblock Yet another look at Harris’ ergodic theorem for Markov chains.
\newblock  Seminar on Stochastic Analysis, Random Fields and Applications VI. Springer, Basel, 109-117, 2011.

\bibitem{Qing Han}
Q. Han and F.H. Lin.
\newblock Elliptic Partial Differential Equations.
\newblock  American Mathematical Society, 2000.



\bibitem{Kiyosi Ito and Makiko Nisio}
K. It$\hat{\mbox{o}}$ and M. Nisio.
\newblock On stationary solutions of a stochastic differential equation.
\newblock  Journal of Mathematics of Kyoto University, 4(1), 1-75, 1964.

\bibitem{Ioannis Karatzas and Steven E. Shreve}
I. Karatzas and Steven E. Shreve.
\newblock Brownian Motion and Stochastic Calculus.
\newblock  Springer, 2000.


\bibitem{Mateusz Kwasnicki}
M. Kwa$\acute{\mbox{s}}$nicki. 
\newblock Ten equivalent definitions of the fractional Laplace operator.
\newblock  Fractional Calculus $\&$ Applied Analysis, 20(1), 7-51, 2017.

\bibitem{Claudia Prevot and Michael Rockner}
C. Pr\'ev$\hat{\mbox{o}}$t and M. R$\ddot{\mbox{o}}$ckner.
\newblock A Concise Course on Stochastic Partial Differential Equations, Lecture Notes in Mathematics.
\newblock  Springer, 1905.

\bibitem{Ken-Iti Sato}
K.I. Sato.
\newblock L\'evy Processes and Infinitely Divisible Distributions.
\newblock  Cambridge University Press, 1999.

\bibitem{Michael Sharpe}
M. Sharpe.
\newblock Zeroes of infinitely divisible densities.
\newblock  The Annals of Mathematical Statistics, 40(4), 1503-1505, 1969.

\bibitem{S.J.Taylor}
S.J. Taylor.
\newblock Sample path properties of a transient stable process.
\newblock  Journal of Mathematics and Mechanics, 16(11), 1229-1246, 1967.

\bibitem{Jian Wang}
J. Wang.
\newblock $L^p$-Wasserstein distance for stochastic differential equations driven by L\'evy Processes.
\newblock  Bernoulli, 22(3), 1598-1616, 2016.



\end{thebibliography}
\end{document}